\documentclass[12pt,letterpaper]{article}

\usepackage{natbib,hyperref}

\usepackage[ left=1in, top=1in, right=1in, bottom=1in]{geometry}
\usepackage{graphicx,bm,colonequals,amsmath,amssymb,url,xcolor,bbm}
\usepackage{array,tabularx,multirow}
\usepackage{enumitem}
\usepackage[font={footnotesize}]{caption,subcaption}
\usepackage[utf8]{inputenc}
\usepackage{enumitem}
\usepackage{soul} 
\usepackage{placeins} 
\usepackage[normalem]{ulem}

\usepackage{titling} 
\predate{}
\postdate{}

\graphicspath{{plots/}}

\usepackage{mathtools}
\mathtoolsset{showonlyrefs} 

\setlength{\bibsep}{2pt}
\bibpunct[, ]{(}{)}{;}{a}{,}{,}
   
\usepackage{amsthm}
\newtheoremstyle{propstyle} 
    {2mm}                    
    {1mm}                    
    {\itshape}                   
    {}                           
    {\scshape}                   
    {.}                          
    {.5em}                       
    {}  
\theoremstyle{propstyle}
\newtheorem{proposition}{Proposition}
\theoremstyle{propstyle}

\theoremstyle{propstyle}
\newtheorem{lemma}{Lemma}
\theoremstyle{propstyle}

\theoremstyle{propstyle}

\theoremstyle{propstyle}

\makeatletter
\renewcommand{\paragraph}{%
  \@startsection{paragraph}{4}%
  {\z@}{2ex \@plus 1ex \@minus .2ex}{-1em}%
  {\normalfont\normalsize\bfseries}%
}
\makeatother

\DeclareMathAlphabet\mathbfcal{OMS}{cmsy}{b}{n}


\newcommand{\ba}{\mathbf{a}}

\newcommand{\bs}{\mathbf{s}}

\newcommand{\bx}{\mathbf{x}}
\newcommand{\by}{\mathbf{y}}

\newcommand{\bz}{\mathbf{z}}

\newcommand{\bA}{\mathbf{A}}

\newcommand{\bI}{\mathbf{I}}

\newcommand{\bU}{\mathbf{U}}

\newcommand{\bK}{\mathbf{K}}

\newcommand{\bfzero}{\mathbf{0}}

\newcommand{\bftheta}{\bm{\theta}}

\newcommand{\bfomega}{\bm{\omega}}

\newcommand{\bfTheta}{\bm{\Theta}}


\newcommand{\GP}{\mathcal{GP}}

\newcommand{\trace}{\text{tr}}

\newcommand{\order}{\mathcal{O}}
\newcommand{\knuth}{\Omega}
\newcommand{\normal}{\mathcal{N}}

\DeclareMathOperator*{\argmax}{arg\,max}
\newcommand{\KL}{\text{KL}}

\newcommand{\inputdomain}{\mathcal{X}}

\newcommand{\locs}{\mathcal{S}}
\newcommand{\dens}{p}
\newcommand{\adens}{\hat p}
\newcommand{\boundary}{\Lambda}



\title{Asymptotic properties of Vecchia approximation for Gaussian processes}

\author{Myeongjong Kang\thanks{Department of Statistics, Texas A\&M University} \and Florian Sch\"afer\thanks{School of Computational Science and Engineering  Georgia Institute of Technology} \and Joseph Guinness\thanks{Department of Statistics and Data Science, Cornell University} \and Matthias Katzfuss\thanks{Department of Statistics, University of Wisconsin--Madison (\texttt{katzfuss@gmail.com})}}

\date{}

\begin{document}

\maketitle

\begin{abstract}
Vecchia approximation has been widely used to accurately scale Gaussian-process (GP) inference to large datasets, by expressing the joint density as a product of conditional densities with small conditioning sets. We study fixed-domain asymptotic properties of Vecchia-based GP inference for a large class of covariance functions (including Mat\'ern covariances) with boundary conditioning. In this setting, we establish that consistency and asymptotic normality of maximum exact-likelihood estimators imply those of maximum Vecchia-likelihood estimators, and that exact GP prediction can be approximated accurately by Vecchia GP prediction, given that the size of conditioning sets grows polylogarithmically with the data size. Hence, Vecchia-based inference with quasilinear complexity is asymptotically equivalent to exact GP inference with cubic complexity. This also provides a general new result on the screening effect. Our findings are illustrated by numerical experiments, which also show that Vecchia approximation can be more accurate than alternative approaches such as covariance tapering and reduced-rank approximations.
\end{abstract}

{\small\noindent\textbf{Keywords:} Sparse inverse Cholesky approximation; Gaussian-process inference; Kriging; boundary conditioning; infill asymptotics}


\section{Introduction \label{sec:intro}}

Gaussian processes (GPs) are popular models for functions, time series and spatial fields, with myriad application areas such as spatial statistics \citep[e.g.,][]{Stein1999,Banerjee2004,Cressie2011}, nonparametric regression and machine learning \citep[e.g.,][]{Rasmussen2006,Deisenroth2010}, the analysis of computer experiments \citep[e.g.,][]{Sacks1989,Kennedy2001,Gramacy2020}, and optimization \citep[e.g.,][]{Mockus1989,Jones1998,Frazier2018}. GP models provide flexible and interpretable tools for uncertainty quantification, as opposed to algorithmic approaches in machine learning. Further, graphical models stacking GPs as layers have been identified as promising tools for deep belief networks \citep[e.g.,][]{Damianou2013,Dunlop2018,Dutordoir2021}. 

Direct application of GPs is computationally intractable for large data sets, since evaluation of multivariate Gaussian densities generally incurs computational cost that is \emph{cubic} in the data size. To tackle this issue, various approximations have been proposed: Covariance tapering \citep[e.g.,][]{Furrer2006,Kaufman2008} imposes sparsity on covariance matrices by truncating long-range correlations, which implies that it, by definition, cannot capture large-scale variation. While asymptotic theory for tapering approximations is well developed \citep{Kaufman2008,Du2009,Shaby2010,Wang2011}, the time complexity of tapering with a fixed radius under fixed-domain asymptotics is still cubic in the data size. Reduced-rank approximation \citep[e.g.,][]{Higdon1998, Wikle1999, Quinonero2005, Banerjee2008, Cressie2008, Katzfuss2010} imposes low-rank structure and thus enjoys various computational advantages, but typically leads to poor approximations of fine-scale variation \citep[e.g.,][]{Stein2013a}. Spectral approximation \citep{Whittle1954,Dahlhaus1987} can produce fast and accurate approximations by leveraging the fast Fourier transform, but its use is limited to stationary models and to gridded data, although recent work has extended its use to incomplete grids \citep{Guinness2017,Guinness2017a}. The use of $\mathcal{H}$- and $\mathcal{H}^2$-matrix approaches \citep{Borm2007} can be also limited due to the challenges with construction of the hierarchy in high dimensions. Several approximations from spatial statistics \citep[e.g.,][]{Lindgren2011a,Nychka2012,Datta2016} have relied on limiting the number of nonzero entries per row of the precision matrix. However, the computational cost of algorithms involving sparse matrices depends on both the number and the locations of the nonzero entries in a complex way. Hence, the computational complexity for sparse precision matrices of size $n \times n$ is not linear in $n$, but rather $\order(n^{3/2})$ in two spatial dimensions, and at least $\order(n^2)$ in higher dimensions.

In recent years, an old idea by \citet{Vecchia1988}, the so-called Vecchia approximation, has become very popular in the spatial-statistics and machine-learning communities. The Vecchia approximation expresses the joint density as a product of conditional densities with reduced conditioning sets. 
Independently of Vecchia's work, \citet{Kaporin1990} derived the equivalent approximation as a sparse factorized approximate inverse preconditioner that is optimal with respect to the Kaporin condition number.  
\citet{Schafer2020} arrived at the same approximation as the minimizer of the Kullback-Leibler (KL) divergence to the exact GP model among all approximations that assume a specific sparsity pattern on the inverse Cholesky factor of the covariance matrix. According to \cite{Katzfuss2017a}, many existing GP approximations can be viewed as special cases of a Vecchia framework, including reduced-rank methods \citep{Quinonero2005, Banerjee2008, Finley2009} and approaches based on iterative partitioning of the domain such as the multi-resolution approximation \citep{Katzfuss2015,Katzfuss2017b} and its special cases \citep[e.g.,][]{Snelson2007,Sang2011a}. 
Vecchia approximations have been shown to perform well in many settings, including spatial, high-dimensional, nonstationary, and multivariate GPs \citep[e.g.,][]{Pardo1997,Eide2002,Stein2004,Datta2016,Guan2020,Katzfuss2018,Katzfuss2017a,Katzfuss2020,Schafer2020,Cao2022,Wu2022,Cao2023,Jimenez2022,Kang2021} and for solving general nonlinear partial differential equations \citep[e.g.,][]{Chen2023}.
 
There have been limited efforts devoted to providing theoretical understanding of Vecchia approximation and the required size of the complexity parameter (i.e., the size of conditioning sets) that trade off computational speed against accuracy. 
One of the earliest attempts to investigate large-sample properties of the Vecchia approach was made by \cite{Stein2004}. Based on the fact that the score function induced by Vecchia approximation is always unbiased, their work suggested that consistency and asymptotic normality of restricted maximum Vecchia-likelihood estimators can be studied using martingale limit theory \citep{Heyde1997,Hall2014}, although a proof was not provided.
\cite{Zhang2012} alluded to an unpublished result stating that the consistency for the maximum Vecchia-likelihood (MVL) estimators of a microergodic parameter holds if certain asymptotics of the conditional expectation given the conditioning set hold. 
Motivated by \cite{Du2009} and \cite{Zhang2012}, \cite{Zhang2021} provide fixed-domain asymptotic properties of Vecchia approximation, but their main results are based on strong assumptions on prediction errors that are only verified for GPs on a one-dimensional domain; also, this work did not provide explicit guidelines for conditioning-set sizes. \cite{Zhu2022} proposed a new ordering and conditioning approach for Vecchia approximations of GPs and provided extensive theoretical results regarding its approximation accuracy, but the applicability to infill asymptotics remains unclear as their theoretical justification depends on the decay properties of the GP's covariance function. \cite{Schafer2020} provided promising results for establishing asymptotic properties of Vecchia approximation, based on a series of works in numerical homogenization and operator adapted wavelets \citep{Malqvist2014, Kornhuber2016, Owhadi2017, Kornhuber2018, Owhadi2019, Schafer2017}, but it remains unclear how these results are related to consistency and asymptotic normality of MVL estimators of model parameters as well as asymptotic properties of GP prediction based on Vecchia approximation.
Most previous studies on asymptotic properties of Vecchia approximation have not thoroughly focused on similarities between asymptotic properties of exact GP and its Vecchia approximation, despite interesting results on asymptotic properties of exact GPs, including asymptotic normality of maximum exact-likelihood (ML) estimators for identifiable parameters of exact GPs under fixed-domain asymptotics \citep[e.g.,][]{Zhang2004,Anderes2010,Kaufman2013}.
Since the Vecchia likelihood generally yields a close approximation to the true GP likelihood for sufficiently large size of conditioning sets relative to $n$, it can be expected that asymptotic properties of Vecchia approximation depend on those of its exact GP.

In this paper, we provide theoretical evidence suggesting that consistency and asymptotic normality of ML estimators imply those of MVL estimators for GPs with covariance functions satisfying certain conditions motivated by the partial differential equation (PDE) literature, and prediction (or kriging) based on an exact GP can be asymptotically equivalent to that based on its Vecchia approximation, given that the size of conditioning sets grows polylogarithmically with the data size under the fixed-domain asymptotic framework. These results are supported by our numerical experiments, which also show that Vecchia approximation can be more accurate than alternative approaches such as covariance tapering and reduced-rank approximations for parameter estimation and GP prediction, and perform similarly as the exact GP.

The remainder of this document is organized as follows. In Section \ref{sec:vecchia}, we introduce a GP model and Vecchia approximation. Section \ref{sec:asymp} discusses asymptotic properties of Vecchia approximation for GPs. In Section \ref{sec:numeric}, we illustrate the performance of maximum Vecchia-likelihood (MVL) estimation and GP prediction based on Vecchia approximation, and compare them to those of alternative approximation approaches. Section \ref{sec:conc} concludes and discusses future work. 
The Supplementary Material contains proofs and additional numerical results.


\section{Gaussian process (GP) and Vecchia approximation \label{sec:vecchia}}

Let $\{y(\bx): \bx \in \inputdomain\}$ be the process of interest on a domain $\inputdomain \subset \mathbb{R}^d$, $d \in \mathbb{Z}^+$. In spatial applications we often have $d=2$ dimensions. Assume that $y(\cdot) \sim \GP(\mu,K)$ is a Gaussian process with mean function $\mu$ and covariance function $K = K_{\bftheta}$. Let $\bftheta \in \bm{\Theta}$ be a vector of covariance parameters, where $\bfTheta$ is the $q$-dimensional parameter space. For the sake of simplicity, we assume $\mu = 0$, as assumed in most of the literature. The covariance function $K$ must be symmetric and positive definite, that is, the covariance matrix $\bK_{n} = \bK_{n} (\bftheta)$ induced by $K$ evaluated at every pair of $n$ inputs $\locs_n = \{ \bx_1, \ldots, \bx_n \}$ through ${(\bK_{n})}_{ij} = K(\bx_i , \bx_j)$, $1 \le i, j \le n$, is symmetric and positive definite as well. For convenience, we assume that the inputs $\bx_1 , \ldots , \bx_n$ are distinct for any $n$. A major computational issue with GP inference is that direct use of the GP likelihood requires Cholesky-factorizing the data covariance matrix $\bK_n$ at a cost that is cubic in $n$. This is prohibitively expensive for datasets exceeding $\approx 10^5$ entries.

Vecchia approximation of GPs is motivated by the ordered conditional representation of the joint density function: Let $\by = (y_1, \ldots, y_n)^{\top}$ be the vector of GP realizations $y_i = y(\bx_i)$ at $\locs_n$. The density function $\dens_n(\by ; \bftheta)$ can be expressed as a product of univariate conditional densities, that is,
$
\dens_n(\by ; \bftheta) = \prod_{i=1}^n \dens_{\bftheta}(y_i|\by_{h(i)}),
$
where $h(i) = (1, \ldots, i-1)$ is the set of ``past'' indices. Vecchia approximation is given by
\begin{equation}
\label{eq:vecchia}
\hat{\dens}_{n,m}(\by ; \bftheta) = \prod_{i=1}^n \dens_{\bftheta}(y_i|\by_{g(i)}),
\end{equation}
where $\by_{h(i)}$ is replaced by $\by_{g(i)}$, $g(i) \subset h(i)$ is the conditioning index set of size $|g(i)| \leq m$. The (maximum) size of conditioning sets, $m$, is a complexity parameter that trades off approximation accuracy for computational cost in the sense that  Kullback-Leibler (KL) divergence between $\dens_n(\by)$ and $\hat{\dens}_{n,m} (\by)$ decreases as $m$ increases \citep{Guinness2016a,Schafer2020}; in the extreme cases, Vecchia approximation assumes independent observations if $m = 0$ and is exact if $m = n - 1$, but the computational feasibility of Vecchia approximation relies on small $m \ll n$.

The performance of Vecchia approximation depends on the choice of ordering of the observations 
in $\by$, and also the choice of conditioning sets $g(2) , \ldots , g(n)$.
In particular, \citet{Guinness2016a} indicated that combining maximum-minimum-distance (maximin) ordering with nearest-neighbor conditioning outperforms alternative approaches.
\citet{Schafer2017,Schafer2020} derive the maximin ordering as a highly simplified multiresolution basis and rigorously prove the resulting exponential decay of conditional correlations.
Hence, we here consider maximin ordering, which orders each successive input to maximize the minimum distance to all previously ordered inputs, 
and nearest-neighbor conditioning whose the $i$th conditioning set consists of the $\min (m, i-1)$ nearest inputs ordered previously. For the sake of simplicity, we assume henceforth that $\by$ follows a maximin ordering.

Vecchia approximation can be highly accurate even if $m$ is small (e.g., 10 to 50), for many popular covariance functions including Mat\'ern covariance functions. This can be explained by the screening effect for popular covariance functions \citep[e.g.,][]{Stein2002,Stein2011,Porcu2020}. Figure \ref{fig:corcond} illustrates the screening effect exploited by Vecchia approximation for Mat\'ern covariance structure with regular locations. Although the magnitude of the screening effect heavily depends on covariance parameters (e.g., range and smoothness), it appears in the figure that conditional correlation maps induced by conditioning on all past observations (top row) are similar to those induced by conditioning on nearest-neighbor past observations (bottom row). Also, it appears that the the screening effect strengthens for increasing $n$.

\begin{figure}[htbp]
    \centering
    \includegraphics[width = 0.2\linewidth]{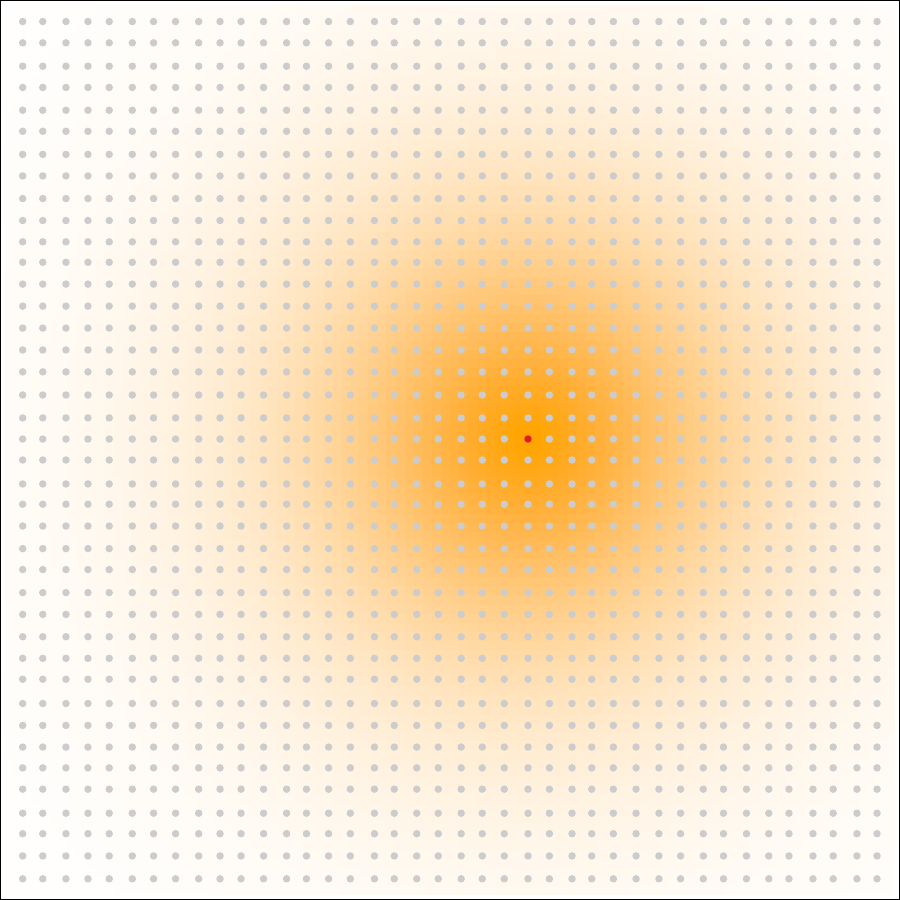}%
    \includegraphics[width = 0.2\linewidth]{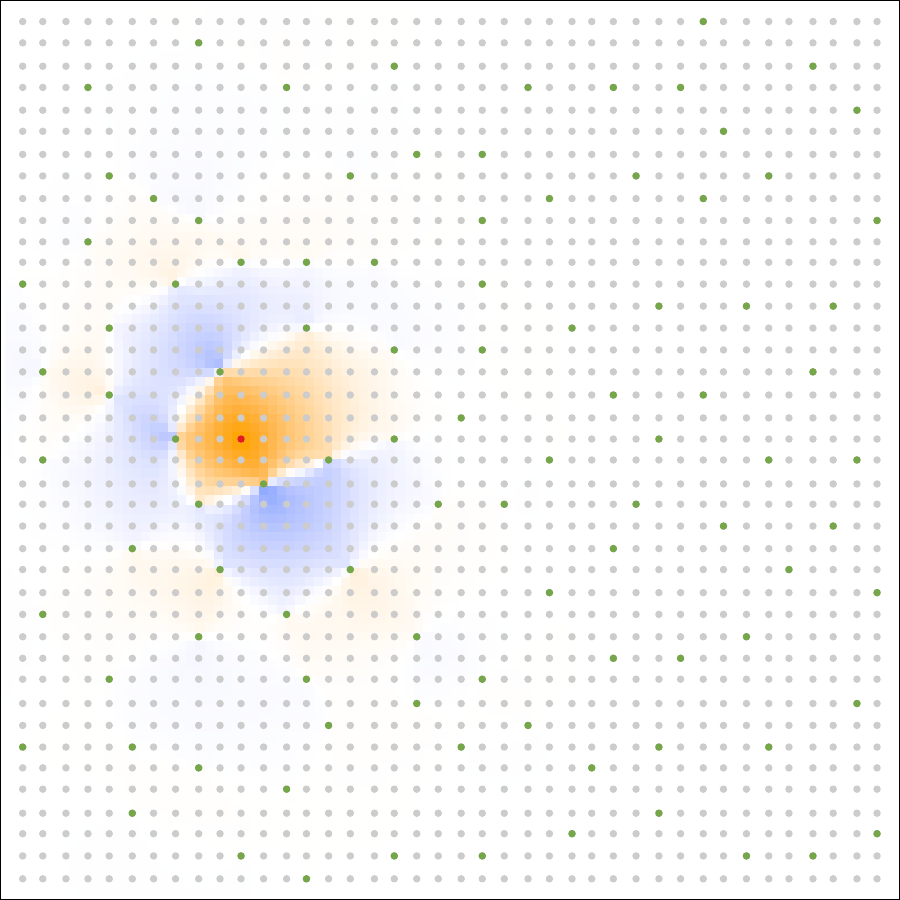}%
    \includegraphics[width = 0.2\linewidth]{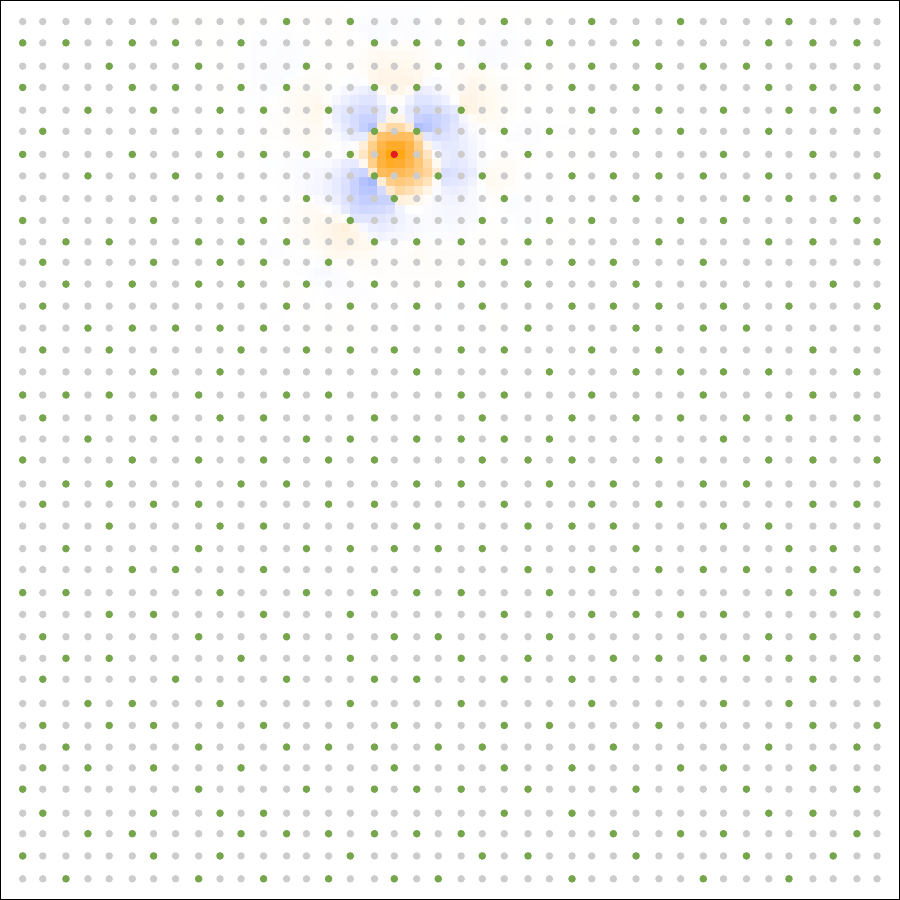}%
    \includegraphics[width = 0.2\linewidth]{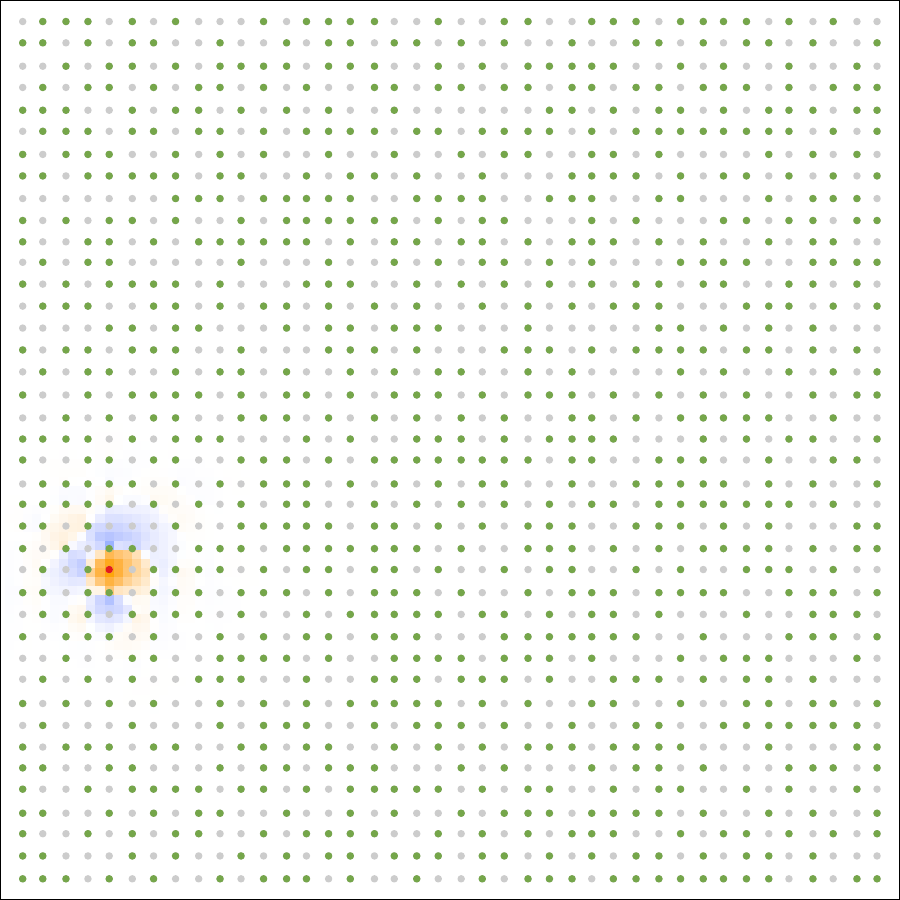}%
    \includegraphics[width = 0.2\linewidth]{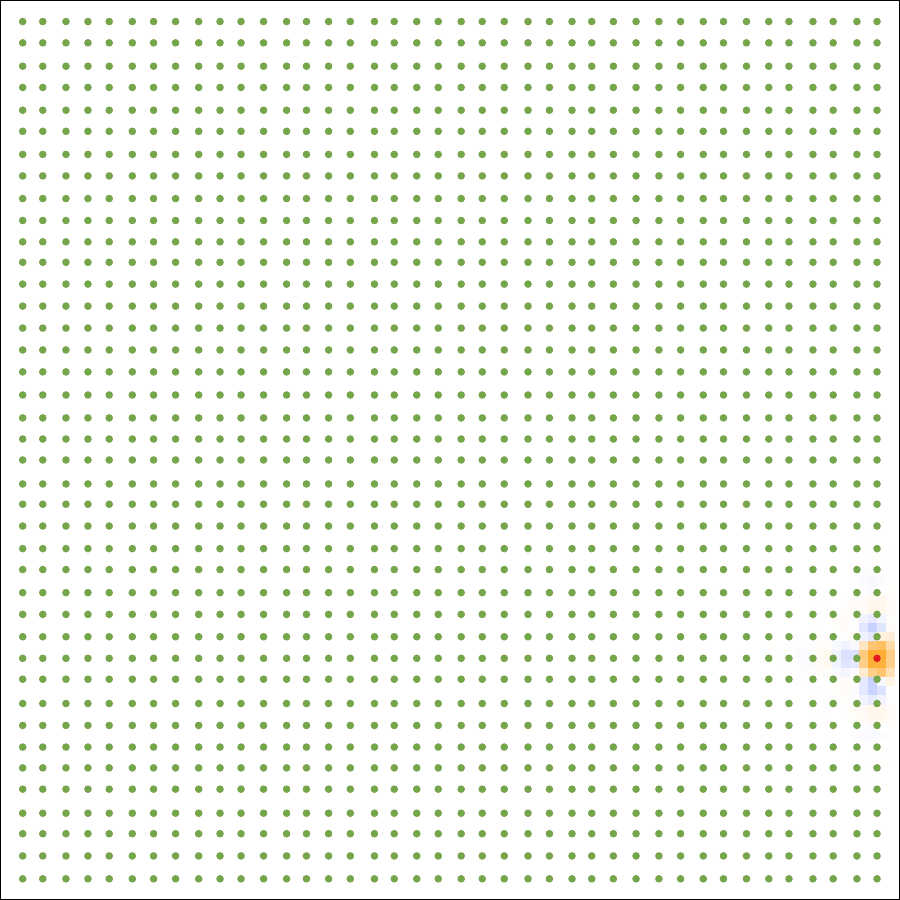}%
    \vspace{0.5em}
    \includegraphics[width = 0.2\linewidth]{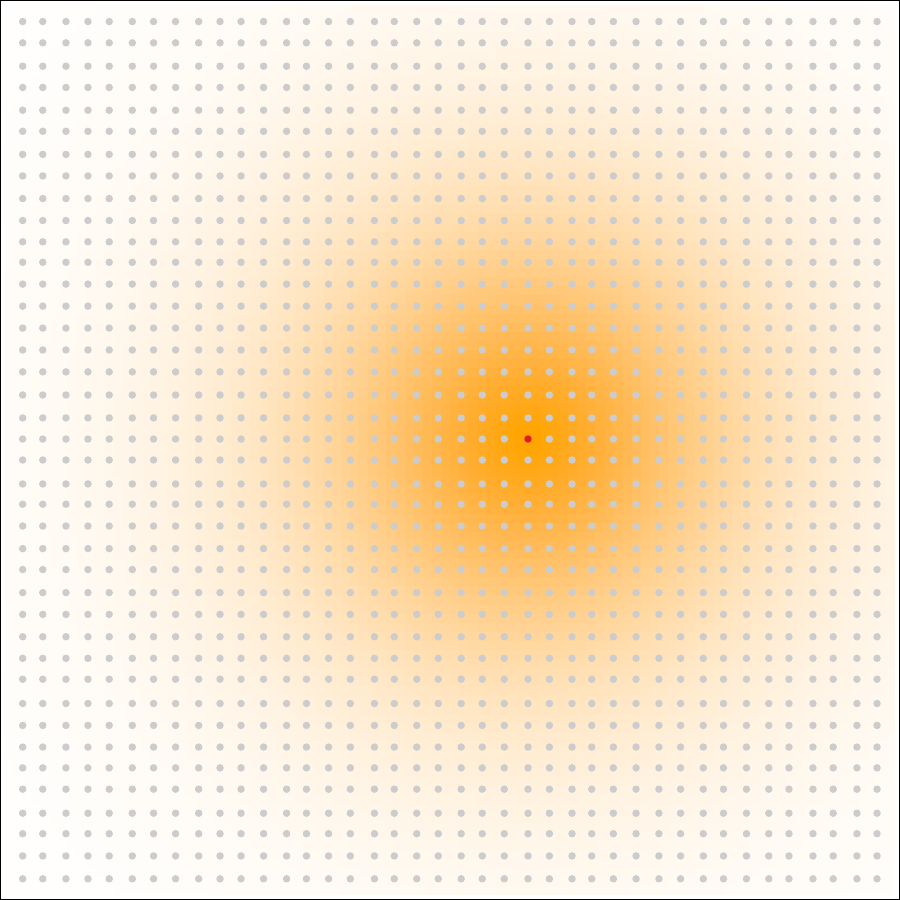}%
    \includegraphics[width = 0.2\linewidth]{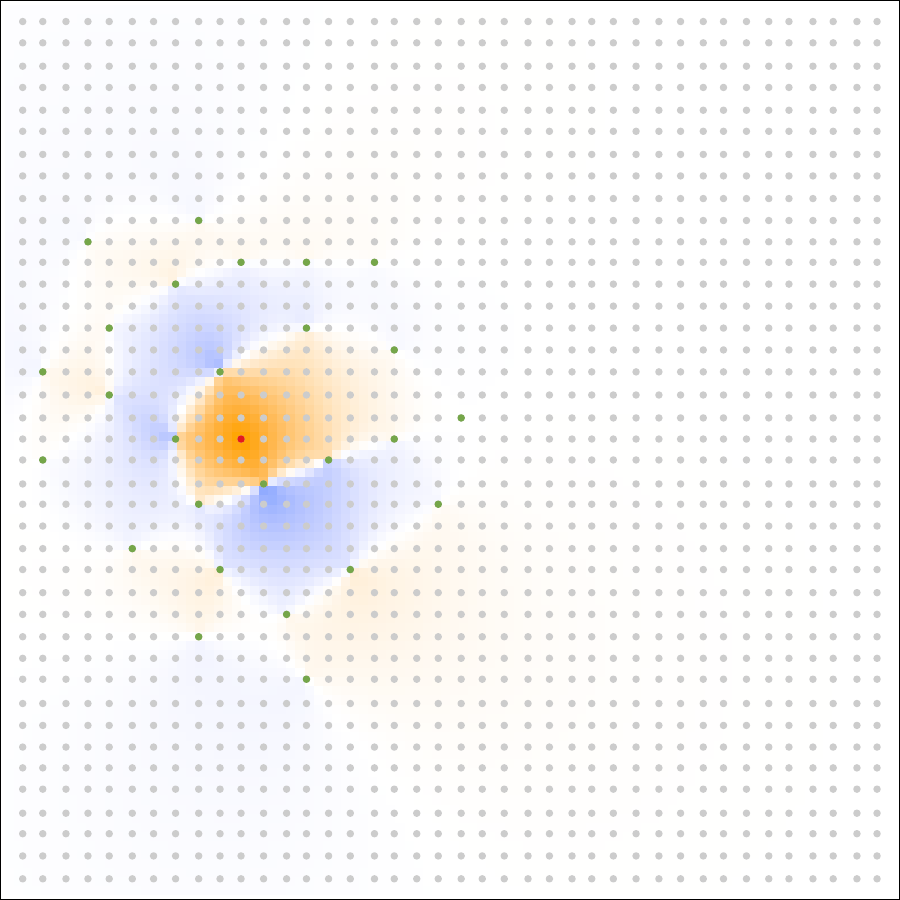}%
    \includegraphics[width = 0.2\linewidth]{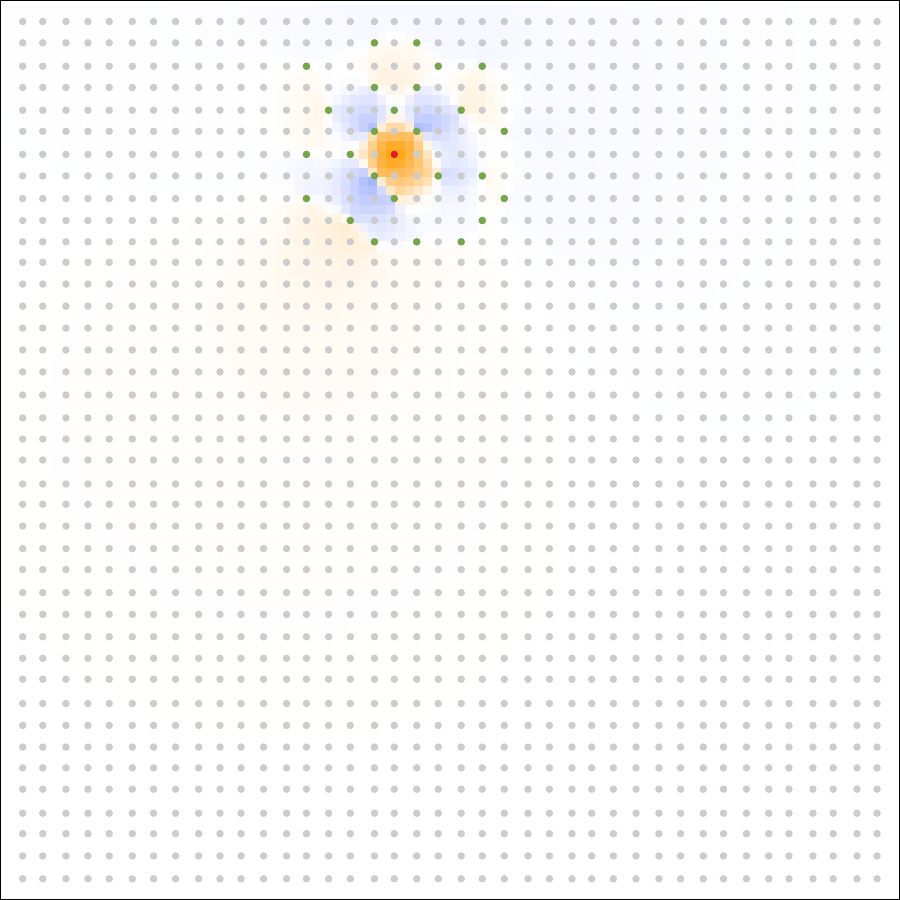}%
    \includegraphics[width = 0.2\linewidth]{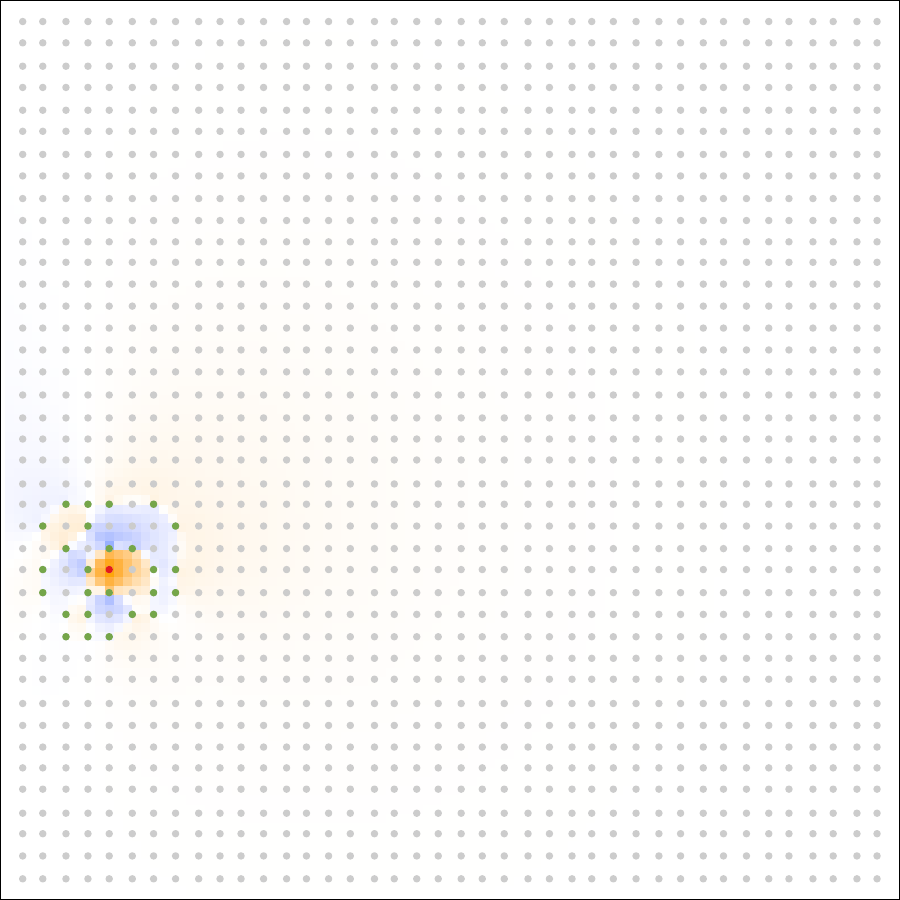}%
    \includegraphics[width = 0.2\linewidth]{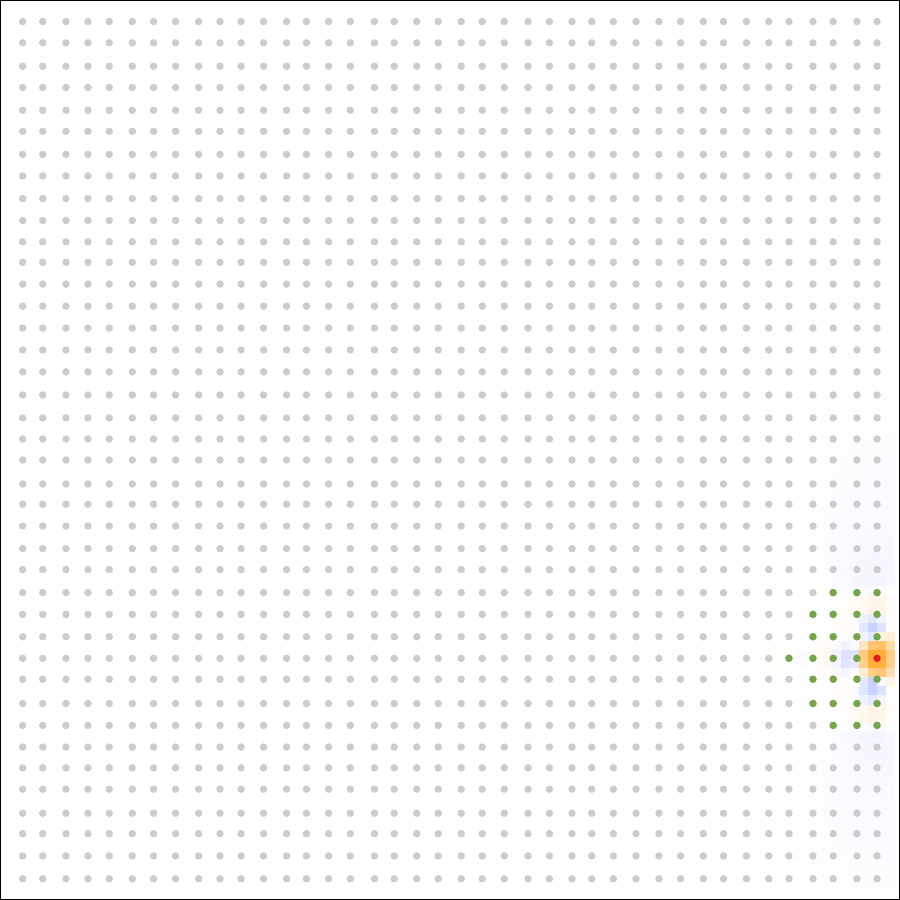}%
    
    \caption{For $i = 1, 100, 400, 900, 1600$ (from left to right), positive (orange) and negative (blue) conditional correlations with the $i$th input $\bx_i$ in the maximin ordering (red points) conditional on all (top) and $m=26$ nearest (bottom) previously ordered inputs (green points), for a GP with Mat\'ern covariance (range $r = 0.1$ and smoothness $\nu = 2$) on a grid of size $n= 40 \times 40 = 1600$. The conditional correlations given only nearest neighbors are almost identical to corresponding conditional correlations given all past observations, which implies that the two likelihoods corresponding to the conditional correlation maps are also similar to each other. This figure is inspired by Figure 5 in \citet{Schafer2020}.
    }
    \label{fig:corcond}
\end{figure}

Vecchia approximation not only leads to computational advantages but also exhibits useful inferential properties. First of all, Vecchia approximation defines a proper Gaussian likelihood so that it is possible to apply any likelihood-based inference method. In particular, the Vecchia likelihood is the Gaussian likelihood with zero-mean and covariance 
$$\hat{\bK}_{n, m} = \hat{\bK}_{n, m} (\bftheta) = (\bU_{n, m} (\bftheta) \bU_{n, m} (\bftheta)^\top)^{-1} = (\bU_{n, m} \bU_{n, m}^\top)^{-1}$$
where $\bU_{n, m}$ is a sparse upper-triangular Cholesky factor of the implied precision matrix of the data vector $\by$ \citep[Proposition 1 in][]{Katzfuss2017a}. Each of the $n$ columns of $\bU_{n, m}$ can be computed in $\mathcal{O}(m^3)$ time, completely in parallel. For a single-core implementation, the computational cost of the Vecchia approximation is $\order(nm^3)$, although this can be reduced to $\order(nm^2)$ using a grouping strategy \citep{Stein2004, Ferronato2015, Guinness2016a, Schafer2020}. 
Regarding the accuracy of Vecchia approximation, $\bU_{n, m}$ is \emph{optimal} in terms of KL divergence between $\normal_n(\mathbf{0}, \bK_n)$ and $\normal_n(\mathbf{0},(\bU_{n, m} \bU_{n, m}^\top)^{-1})$ for a given ordering of the observations and sparsity pattern imposed by the conditioning sets \citep{Schafer2020}.


\section{Asymptotic properties of Vecchia approximation \label{sec:asymp}}

We are interested in establishing consistency and asymptotic normality of maximum Vecchia-likelihood (MVL) estimators based on those of maximum exact-likelihood (ML) estimators, and also exploring asymptotic behavior of GP prediction based on Vecchia approximation.

This section is divided into two parts: The first part considers GPs with covariance functions associated with particular types of spectral densities and boundary conditions. These assumptions are an idealization of a large class of popular covariance functions \citep[e.g.,][]{Schafer2017,Schafer2020}. For regularly (or roughly homogeneously) distributed data points in the domain, the covariance functions identified as Green’s functions of integer-order elliptic partial differential equations are asymptotically smooth enough to hold the assumptions. A notable example is the Mat\'ern class with half-integer smoothness and zero Dirichlet boundary condition. Regarding this, GPs with such covariance functions are referred to as GPs with boundary conditioning. The second part considers GPs with Mat\'ern covariance functions but without boundary conditioning, which are popular for spatial data. 

\subsection{Vecchia approximation to GPs with boundary conditioning \label{sec:contGP}}

In this section, the following assumptions are made:
\begin{itemize}
    \item[\textbf{(A1)}] Regular domain: $\inputdomain$ is a bounded domain of $\mathbb{R}^d$ with Lipschitz boundary $\boundary = \partial \inputdomain$, that is, $\boundary$ can be locally represented by a graph of a Lipschitz continuous function.
    \item[\textbf{(A2)}] Regular locations: Let $\left\{ \locs_n \right\}_{n \in \mathbb{Z}^+}$ be an increasing sequence of ordered finite subsets $\locs_n$ of $\inputdomain \setminus \boundary$ satisfying that there exist $c_1 , c_2 >0$ such that $$0 < c_1 < \delta_n < c_2 < \infty,$$ where $\delta_n$ is a measure of homogeneity of $\locs_n$ defined by 
    \begin{equation}
        \label{eq:delta}
    \delta_n = \frac{\min_{\bx_i \neq \bx_j \in \locs_n} \text{dist} \left( \bx_i ,\, \lbrace \bx_j \rbrace \cup \boundary \right)}{\max_{\bx \in \inputdomain} \text{dist} \left( \bx ,\, \locs_n \cup \boundary \right)},
    \end{equation}
    and $\text{dist} (\bx , \locs) = \inf_{\bx' \in \locs} \| \bx - \bx' \|.$ 
    \item[\textbf{(A3)}] Reciprocal spectral density: The spectral density $\psi^{-1}$ of the covariance function $K_0$ is the reciprocal of an even polynomial, that is, $\psi(\bfomega) = \sum_{|j| \leq s} a_{j} \bfomega^{2 j}$ with $s > d$, and is uniformly elliptic in the sense that there exists a constant $c_{\psi} > 0$ such that for all $\bfomega \in \mathbb{R}^d$,
    \begin{equation}
        \psi(\bfomega) \geq c_{\psi} \left(1 + \|\bfomega\|^{2s}\right).
    \end{equation}
    \item[\textbf{(A4)}] Boundary conditioning: The covariance function $K$ of the GP is defined as $K_0$ conditional on the event that $y(\bx) = 0$ for all $\bx \in \mathbb{R}^d \setminus \inputdomain$.
\end{itemize}
The assumption (A1) is mild and can be considered valid if an input domain is either a $d$-dimensional (hyper)rectangle or a (hyper)sphere. (A2) is a standard regularity condition on the inputs \citep{Johnson1990}, defined as the ratio between the smallest interpoint distance in $\locs_n$, and the distance to the most remote point in $\inputdomain$ relative to the points in $\locs_n$. This ensures that the points in $\locs_n$ cover $\inputdomain$ with out any two points in $\locs_n$ getting too close together, mimicking a maximin ordering. For instance, consider inputs $\locs_{n} = \{ \bs_{ij} = (i/\sqrt{n+1} , j/\sqrt{n+1}) ; i, j = 1, \ldots , \sqrt{n} \}$ in the unit-square input domain $\inputdomain = [0,1]^2$ for a square number $n$. Then, the homogeneity measure $\delta_n$ is $\sqrt{2}$, since the numerator in \eqref{eq:delta} is $1/\sqrt{n+1}$ and the denominator is $1 / (\sqrt{2(n+1)})$. (A3) is broad enough to include a wide range of covariance functions, including a Mat\'ern class with smoothness $\nu$ satisfying $2(\nu+d/2) \in \mathbb{Z}^+$. \cite{Schafer2020} suggested that even the Mat\'ern class with non-half-integer smoothness and Cauchy class \citep{Gneiting2004} are subject to similar behavior. Although it can be seen difficult in many applications to verify the assumption (A4), there are several applications whose boundary assumptions share similarities with our boundary conditioning assumption \citep[e.g.,][]{Solin2019,Swiler2020}. 

To describe the exact or lower bound on the size of conditioning sets, $m = m(n)$, in terms of $n$, we use the standard $\knuth$-notation \citep{Knuth1976}: By definition, $m(n) = \knuth ( f(n) )$ is equivalent to $f(n) = \order (m (n))$, where $f$ is a function of $n$. For the sake of simplicity, we suppress the argument $n$ for $m$.

\begin{proposition}[Convergence in KL divergence of Vecchia approximation] \label{prop:klconv}
    Assume (A1)--(A4). For any $\bftheta \in \bfTheta$,
    \begin{equation}
        m = \knuth(\log^d n) \quad \Rightarrow \quad \KL \Big( \dens_{n} (\cdot ; \bftheta) \| \hat{\dens}_{n,m} (\cdot ; \bftheta) \Big) \rightarrow 0 \, \text{ as } n \rightarrow \infty,
    \end{equation}
    where the KL divergence is defined between Gaussian measures with the exact likelihood $\dens_{n}$ and Vecchia likelihood $\hat{\dens}_{n,m}$ under (A4). Note that the KL divergence decays faster than any fixed-order polynomial as a function of $n$. 
\end{proposition}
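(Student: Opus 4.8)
The plan is to reduce the statement to the exponential-decay estimates of \citet{Schafer2020} through the closed form of the Gaussian KL divergence and the optimality of the Vecchia factor. Since $\dens_n = \normal_n(\mathbf{0},\bK_n)$ and $\hat\dens_{n,m} = \normal_n(\mathbf{0},\hat\bK_{n,m})$ with $\hat\bK_{n,m}^{-1} = \bU_{n,m}\bU_{n,m}^\top$, I would start from
\[
\KL\bigl(\dens_n \,\|\, \hat\dens_{n,m}\bigr) = \tfrac12\Bigl(\tr\bigl(\bU_{n,m}\bU_{n,m}^\top\bK_n\bigr) - n + \log\det\hat\bK_{n,m} - \log\det\bK_n\Bigr).
\]
The key structural fact, recalled in Section~\ref{sec:vecchia}, is that $\hat\bK_{n,m}$ \emph{minimizes} this KL divergence over all zero-mean Gaussians whose precision Cholesky factor is supported on the sparsity pattern induced by the $m$-nearest-neighbor conditioning sets. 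It therefore suffices to produce \emph{any} competitor with a comparable or coarser pattern and bound its KL divergence from above; the Vecchia value is automatically no larger.

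Second, I would pass from nearest-neighbor conditioning to the radius-based conditioning analyzed by \citet{Schafer2020}, in which input $i$ conditions on every previously ordered input within distance $\rho\,\ell_i$, where $\ell_i$ is the maximin length scale at step $i$ and $\rho>0$ is a multiplier. Under \textbf{(A2)}, the points behave like a quasi-uniform grid at every scale: earlier inputs are mutually separated by $\gtrsim \ell_i$, so a volume/packing argument bounds the number of earlier inputs inside the ball of radius $\rho\ell_i$ by $C_d\,\rho^d$, uniformly in $i$, with $C_d$ depending only on $d$ and the constants $c_1,c_2$ bounding $\delta_n$. Choosing $\rho = (m/C_d)^{1/d}$ then guarantees that all earlier inputs within radius $\rho\ell_i$ are among the $m$ nearest, since they are strictly closer than any input outside the ball and number at most $m$. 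Hence the $m$-nearest-neighbor pattern contains the radius-$\rho$ pattern, its feasible set is larger, and by optimality its KL divergence is no larger than that of the radius-$\rho$ approximation.

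Third, I would invoke the main exponential-decay theorem of \citet{Schafer2020}. Assumptions \textbf{(A3)}--\textbf{(A4)} supply exactly its hypotheses: \textbf{(A3)} makes $\psi$ a uniformly elliptic even polynomial of order $2s>2d$, so that $K_0$ is the Green's function of an integer-order elliptic operator, and \textbf{(A4)} imposes the boundary conditions that realize $K$ as such a Green's function on the regular domain of \textbf{(A1)}. Their estimate yields
\[
\KL\bigl(\dens_n \,\|\, \hat\dens_{n,m}\bigr) \;\le\; C\, n^{a}\,\exp(-\gamma\rho),
\]
for constants $a,\gamma,C>0$ depending on $d,s,c_\psi,c_1,c_2$ but not on $n$ or $\rho$ (the polynomial-in-$\rho$ factors of their bound being absorbed by a slight decrease of $\gamma$). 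Substituting $\rho = (m/C_d)^{1/d}$ with $m = \knuth(\log^d n)$ gives $\rho \ge c_0\log n$, whence $\KL \le C\, n^{a-\gamma c_0}$. Taking the implied constant in $m = \knuth(\log^d n)$ large enough that $\gamma c_0 > a$ drives the bound to zero; moreover, increasing this constant makes the exponent $a-\gamma c_0$ smaller than any prescribed $-k$, which is the claimed decay faster than any fixed-order polynomial.

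The main obstacle I anticipate is the \emph{uniform} translation between the combinatorial size $m$ and the geometric radius $\rho$ across the \emph{entire} maximin ordering: the packing bound $C_d\rho^d$ must hold simultaneously at every length scale $\ell_i$, which is precisely where the two-sided homogeneity bound $0<c_1<\delta_n<c_2<\infty$ of \textbf{(A2)} does the essential work, rather than a merely global density assumption. The secondary technical point is to verify that boundary conditioning under \textbf{(A4)} really places the problem inside the regime covered by the decay theorem, and that the $n$-dependent prefactor is genuinely polynomial so that the exponential factor dominates; once these are in place, the stated convergence follows.
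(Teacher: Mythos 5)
Your outline follows essentially the same route as the paper's proof: pass from nearest-neighbor to radius-based conditioning via a packing bound at the maximin length scales $\ell_i$ (so that for $m \gtrsim \rho^d$ the size-$m$ neighbor set contains the radius-$\rho$ set), use the fact that enlarging the conditioning set can only decrease the KL divergence, invoke the exponential-decay theorem of \citet{Schafer2017,Schafer2020} for the radius-based approximation, and then set $\rho \asymp \log n$ (equivalently $\epsilon = n^{-k}$) to get decay faster than any fixed polynomial. All of these reduction steps are correctly identified and match the paper.

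The genuine gap is the step you defer as a ``secondary technical point'': verifying that (A3)--(A4) actually place the boundary-conditioned covariance operator within the hypotheses of the decay theorem. This is not a side condition---it is the bulk of the paper's proof. Because (A4) conditions on the values of the process on all of $\mathbb{R}^d \setminus \inputdomain$ (an infinite, uncountable set), one cannot work with finite covariance matrices; the paper constructs the full Gaussian measure on $H_0^s$, identifies the precision operator $\mathcal{L} = \mathcal{G}^{-1}$ as the local, symmetric, uniformly elliptic differential operator $\sum_{|\alpha|\le s}(-1)^{|\alpha|}a_\alpha D^{2\alpha}$ using (A3), and then proves---via a min--max duality argument for the shorted operator together with a Sobolev extension theorem (Theorem 5.11 of \citealp{Brewster2014}, using $s>d$ so that $H^s$ embeds into continuous functions)---that conditioning on the complement of $\inputdomain$ yields exactly the restriction of $\mathcal{L}$ to $H_0^s(\inputdomain)$, which inherits locality, symmetry, and positivity. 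Without this identification, the conditional covariance under (A4) is not known to be the Green's operator of a local elliptic problem, and the appeal to the exponential-decay theorem is unjustified. A smaller imprecision: the mutual separation of the points preceding $\bx_i$ at scale $\ell_i$ comes from the maximin ordering itself, not from (A2); (A2) enters in controlling the covering radius and the constant $c_{\delta_n}$ relating $m$ and $\rho^d$.
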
 

Proposition \ref{prop:klconv} implies the convergence in KL divergence of the Vecchia-GP distribution to the exact-GP distribution, given that $m$ grows at least in the order of $\log^d (n)$. Notably, this provides a theoretical criterion for the selection of the size of conditioning sets, $m$. The constant in the $\knuth$ notation for the lower bound on $m$ depends on the input dimension $d$, input domain $\inputdomain$, homogeneity measure $\{ \delta_n \}$, and covariance function $K_{\bftheta}$. Although these relationships are not as simple as the one obtained for $n$, we believe that the constant is relatively small; this conjecture is explored numerically in Section \ref{sec:numeric}.

\begin{proposition}[Asymptotic equivalence of Vecchia GP prediction] \label{prop:predbound}
    Assume (A1), (A3), and (A4). For any $\bftheta \in \bfTheta$ and new input $\bx^* \in \inputdomain \setminus \boundary$, if (A2) holds with $\locs^*_n = \locs_n \cup \{ \bx^* \}$,
    \begin{equation}
        m = \knuth(\log^d n) \; \Rightarrow \; \KL \Big( \dens_{\bftheta} (y^* | \by_{1:n}) \,\Big\|\, \dens_{\bftheta} (y^* | \by_{g^*}) \Big) \rightarrow 0 \; \text{ as } \; n \rightarrow \infty,
    \end{equation}
    where $y^* = y(\bx^*)$, $g^* \subset \{ 1, \ldots , n \}$ is the nearest-neighbor conditioning index set of size $m$ for $\bx^*$, and the KL divergence is defined between Gaussian measures with densities $\dens_{\bftheta} (y^* | \by_{1:n})$ and $\dens_{\bftheta} (y^* | \by_{g^*})$ under (A4).
\end{proposition}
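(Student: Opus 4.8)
The plan is to use that both conditional laws are univariate Gaussians and to inherit the required decay directly from the joint convergence in Proposition~\ref{prop:klconv}.

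First I would put the target in closed form. Writing $\mu_1 = \mathbb{E}[y^*\mid\by_{1:n}]$, $\mu_2 = \mathbb{E}[y^*\mid\by_{g^*}]$ and $\sigma_1^2 = \var(y^*\mid\by_{1:n})$, $\sigma_2^2 = \var(y^*\mid\by_{g^*})$, the Gaussianity of the model makes the two variances deterministic (they depend on $\locs_n^*$ only), while the kriging means are linear in $\by$. The Gaussian KL formula splits as
\[
\KL\big(\dens_{\bftheta}(y^*\mid\by_{1:n})\,\big\|\,\dens_{\bftheta}(y^*\mid\by_{g^*})\big) = \tfrac12\Big(\tfrac{\sigma_1^2}{\sigma_2^2}-1+\log\tfrac{\sigma_2^2}{\sigma_1^2}\Big) + \frac{(\mu_1-\mu_2)^2}{2\sigma_2^2},
\]
a deterministic variance part plus a data-dependent mean part. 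Since $g^*\subset\{1,\dots,n\}$, the tower property gives $\mu_2 = \mathbb{E}[\mu_1\mid\by_{g^*}]$, so $\mu_1-\mu_2$ is orthogonal to $\mu_2$ and $\mathbb{E}[(\mu_1-\mu_2)^2] = \mathbb{E}[\mu_1^2]-\mathbb{E}[\mu_2^2] = \sigma_2^2-\sigma_1^2$. Consequently the entire expression is controlled by the single ratio $\sigma_2^2/\sigma_1^2\ge1$; indeed $\mathbb{E}[\KL] = \tfrac12\log(\sigma_2^2/\sigma_1^2)$, and every contribution vanishes exactly when $\sigma_2^2/\sigma_1^2\to1$, i.e.\ when the $m$ nearest neighbours recover essentially all predictive information (the screening effect).

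Next I would reduce this ratio to Proposition~\ref{prop:klconv}. Order $\locs_n^* = \locs_n\cup\{\bx^*\}$ so that $\bx^*$ is the \emph{last} point; then the terminal Vecchia factor is exactly $\dens_{\bftheta}(y^*\mid\by_{g^*})$ with $g^*$ the $m$ nearest neighbours of $\bx^*$, and, because no later factor conditions on $y^*$, the conditional law of $y^*$ under the Vecchia model coincides with that factor. The chain rule for KL divergence, splitting off the last coordinate, then yields
\[
\KL\big(\dens_{n+1}\,\|\,\hat{\dens}_{n+1,m}\big) = \KL\big(\dens_{n}\,\|\,\hat{\dens}_{n,m}\big) + \mathbb{E}_{\dens_n}\!\Big[\KL\big(\dens_{\bftheta}(y^*\mid\by_{1:n})\,\big\|\,\dens_{\bftheta}(y^*\mid\by_{g^*})\big)\Big].
\]
Both summands on the right are nonnegative, so the prediction term is bounded above by $\KL(\dens_{n+1}\|\hat{\dens}_{n+1,m})$. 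Applying Proposition~\ref{prop:klconv} to the $(n{+}1)$-point configuration $\locs_n^*$ --- admissible since (A2) is assumed for $\locs_n^*$ --- the joint divergence vanishes under $m = \knuth(\log^d n)$, forcing $\tfrac12\log(\sigma_2^2/\sigma_1^2)\to0$, hence $\sigma_2^2/\sigma_1^2\to1$. This makes the deterministic variance part vanish surely and the mean part vanish in expectation, delivering the stated convergence of the KL divergence.

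The hard part will be justifying that Proposition~\ref{prop:klconv} still applies once $\bx^*$ is pinned to the last position, because that ordering is not the maximin ordering of $\locs_n^*$ for which the underlying exponential decay of conditional correlations \citep{Schafer2020} is derived. I would argue that the terminal point is in fact the most favourable case for screening: its conditioning set is the densest neighbourhood available at the fill-distance scale of the regular set $\locs_n^*$, so the length-scale and exponential-decay estimates behind Proposition~\ref{prop:klconv} transfer to the last factor and bound $\log(\sigma_2^2/\sigma_1^2)$ by a quantity exponentially small in $m^{1/d}$, which is $o(1)$ for $m=\knuth(\log^d n)$. A secondary point to handle carefully is the meaning of ``$\KL\to0$'': since $\mu_1,\mu_2$ depend on the realized data the divergence is random, so only the variance-ratio contribution vanishes deterministically and the conclusion is understood as convergence in expectation, equivalently in probability.
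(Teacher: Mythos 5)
Your proposal is correct and follows essentially the same route as the paper: the paper's proof consists of Lemma~\ref{lemm:regbound} (which is exactly your chain-rule decomposition of $\KL(\dens_{n+1}\|\hat{\dens}_{n+1,m})$ into a sum of nonnegative expected conditional KL terms) combined with ordering the new input last and invoking Proposition~\ref{prop:klconv} on $\locs_n^*$. Your additional closed-form computation showing $\mathbb{E}[\KL]=\tfrac12\log(\sigma_2^2/\sigma_1^2)$ and your explicit caveat that the convergence is in expectation (hence in probability) of a data-dependent quantity are refinements the paper leaves implicit, not a different argument.
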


Proposition \ref{prop:predbound} implies that the true predictive distribution of unobserved variable at new input is asymptotically equivalent with its Vecchia approximation, which can be obtained in $\order (\log^{3d} n)$ time, given that $m$ grows polylogarithmically with $n$. This is a useful new result on the screening effect, indicating that only the nearest $\log^d(n)$ variables are necessary for prediction in our setting. The proposition can be shown by Lemma \ref{lemm:regbound} in the Supplementary Material and ordering the new input last. 

Let $\bftheta_0 \in \bfTheta$ be the data-generating covariance parameter. To discuss consistency of ML and MVL estimators for $\bftheta$, we need more assumptions:
\begin{itemize}
    \item[\textbf{(A5)}] Compact parameter space: $\bftheta$ is an element of a closed and bounded subset $\bfTheta$ of $\mathbb{R}^q$, $q \in \mathbb{Z}^+$.
    \item[\textbf{(A6)}] Smooth log-likelihoods: For any $n,m \in \mathbb{Z}^+$, $\log \dens_n(\by ; \bftheta)$ and $\log \hat{\dens}_{n,m}(\by ; \bftheta)$ are twice continuously differentiable on the interior of $\bfTheta$ almost surely under $P_{\bftheta_0}$.
    \item[\textbf{(A7)}] Uniformly bounded first derivatives: There exists a constant $c_\nabla > 0$ such that 
    $$\mathbb{E}_{\bftheta_0} \Big[ \sup_{\bftheta \in \bfTheta} \| \frac{\partial}{\partial \bftheta} \log \dens_n(\by ; \bftheta) \| \Big] + \mathbb{E}_{\bftheta_0} \Big[ \sup_{\bftheta \in \bfTheta} \| \frac{\partial}{\partial \bftheta} \hat{\dens}_{n,m}(\by ; \bftheta) \| \Big] < c_\nabla,$$
    for all $n \in \mathbb{Z}^+$.
    \item[\textbf{(A8)}] Identifiability: $P_{\bftheta_1} \equiv P_{\bftheta_2}$ implies $\bftheta_1 = \bftheta_2$ for any $\bftheta_1 , \bftheta_2 \in \bfTheta$, where $P_{\bftheta}$ is a Gaussian measure with respect to the GP with covariance parameter $\bftheta \in \bfTheta$.
\end{itemize}
The above assumptions are fairly standard in the literature. (A5) - (A7) impose mild conditions on covariance functions and are reasonable in many applications, especially for evenly distributed inputs in the domain. (A8) is on equivalence of Gaussian measures induced by GPs and needs careful consideration.

Under assumptions (A5)--(A8), we can define the ML and MVL estimators, respectively, for the covariance parameter $\bftheta \in \bfTheta$ as follows:
\begin{equation}
    \hat{\bftheta}_{n} = \argmax_{\bftheta \in \bfTheta} \dens_n(\by ; \bftheta) 
    \quad \text{and} \quad 
    \hat{\bftheta}_{n,m} = \argmax_{\bftheta \in \bfTheta} \adens_{n,m}(\by ; \bftheta),
\end{equation}
for $n, m \in \mathbb{Z}^+$. Note that fixed-domain asymptotic properties of the ML estimator $\hat{\bftheta}_{n}$ have been studied over decades, as mentioned in Section \ref{sec:intro}. For instance, \cite{Zhang2004} provided substantive results on identifiability for parameters of Mat\'ern Gaussian processes. \cite{Anderes2010} and \cite{Kaufman2013} studied consistencies for identifiable parameters under low- and high-dimensional fixed-domain asymptotics, respectively.

\begin{proposition}[Asymptotic equivalence of MVL estimation] \label{prop:mvlebound}
    Let (A1)--(A8) hold. Assume that $\hat{\bftheta}_n$ is consistent and asymptotically normally distributed, that is, $$\hat{\bftheta}_n \rightarrow \bftheta_0 \, \text{ in probability under } P_{\bftheta_0}$$ and there exists a sequence $\{ \bA_n (\bftheta_0) \}$ of non-random and positive-definite matrices such that $$\bA_n (\bftheta_0) (\hat{\bftheta}_n - \bftheta_0) \rightarrow \normal_q (\bfzero , \bI_q) \, \text{ in distribution under } P_{\bftheta_0}.$$ Then, $\hat{\bftheta}_{n,m}$ is consistent and asymptotically normally distributed, that is, there exists a sequence $m = \knuth(\log^d n)$ such that $$\hat{\bftheta}_{n,m} \rightarrow \bftheta_0 \, \text{ in probability under } P_{\bftheta_0}$$ and $$\bA_n (\bftheta_0) (\hat{\bftheta}_{n,m} - \bftheta_0) \rightarrow \normal_q (\bfzero , \bI_q) \, \text{ in distribution under } P_{\bftheta_0}.$$ 
\end{proposition}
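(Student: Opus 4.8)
The plan is to transfer the asymptotics from $\hat\bftheta_n$ to $\hat\bftheta_{n,m}$ by showing that the two log-likelihood surfaces, together with their first two $\bftheta$-derivatives, are uniformly close, and that this closeness is strong enough to overwhelm the growing normalization $\bA_n(\bftheta_0)$. Write $D_n(\bftheta) = \log\dens_n(\by;\bftheta) - \log\hat\dens_{n,m}(\by;\bftheta)$. Since both densities are zero-mean Gaussian, $D_n$ and its $\bftheta$-derivatives are (affine-)quadratic forms in $\by$ assembled from $\bK_n(\bftheta)^{-1}-\hat\bK_{n,m}(\bftheta)^{-1}$ and $\log\det$ terms, so everything reduces to controlling this precision-matrix difference, and its derivatives, in operator norm.

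Step 1 (master estimate; the main obstacle). I would upgrade Proposition~\ref{prop:klconv} from a pointwise statement about the KL value to a uniform-in-$\bftheta$ statement about the precision-matrix difference and its derivatives: for $m=\knuth(\log^d n)$ and every multi-index $\alpha$ with $|\alpha|\le 2$, $\sup_{\bftheta\in\bfTheta}\|\partial_\bftheta^\alpha(\bK_n(\bftheta)^{-1}-\hat\bK_{n,m}(\bftheta)^{-1})\|$ decays faster than any polynomial in $n$. The undifferentiated bound is essentially Proposition~\ref{prop:klconv}; the obstacle is the derivative and uniform versions. I would obtain them by revisiting the estimate underlying Proposition~\ref{prop:klconv} --- the exponential decay of the inverse-Cholesky-factor entries from the numerical-homogenization theory of Sch\"afer et al.\ --- and checking that this decay is uniform over the compact set $\bfTheta$ and survives $\bftheta$-differentiation, using that $K_\bftheta$ is smooth in $\bftheta$ on $\bfTheta$ (A5)--(A7). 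Feeding these operator-norm bounds into $\mathbb{E}_{\bftheta_0}[\by^\top G\by]=\tr(G\bK_n(\bftheta_0))$ and $\var_{\bftheta_0}(\by^\top G\by)=2\tr(G\bK_n(\bftheta_0)G\bK_n(\bftheta_0))$, with $G$ a derivative of the precision difference, shows that the factors of $n$ (resp.\ $n^2$) from these traces are overwhelmed, so that
\[
\sup_{\bftheta\in\bfTheta}\Big(|D_n(\bftheta)|+\|\nabla D_n(\bftheta)\|+\|\nabla^2 D_n(\bftheta)\|\Big)\to 0
\]
in probability (indeed in $L^2$) under $P_{\bftheta_0}$, again faster than any polynomial.

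Step 2 (consistency). With $\sup_\bftheta|D_n(\bftheta)|\to 0$, the MVL estimator is a near-maximizer of the exact likelihood: from $\hat\dens_{n,m}(\by;\hat\bftheta_{n,m})\ge \hat\dens_{n,m}(\by;\hat\bftheta_n)$ one gets $0\le \log\dens_n(\by;\hat\bftheta_n)-\log\dens_n(\by;\hat\bftheta_{n,m})\le 2\sup_\bftheta|D_n(\bftheta)|\to 0$. Combined with the assumed consistency of $\hat\bftheta_n$, compactness (A5), the stochastic equicontinuity supplied by the uniform gradient bound (A7), and identifiability (A8), a Wald-type extremum argument yields $\hat\bftheta_{n,m}\to\bftheta_0$ in probability. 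The delicate point is the well-separated-maximum property needed to exclude spurious near-maximizers far from $\bftheta_0$; I would derive it from (A8) together with the growing curvature of $\log\dens_n$ near $\bftheta_0$ that is implied by the asymptotic normality of $\hat\bftheta_n$.

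Step 3 (asymptotic normality). By Slutsky it suffices to show $\bA_n(\bftheta_0)(\hat\bftheta_{n,m}-\hat\bftheta_n)\to 0$ in probability. Both estimators are interior critical points (A5, A6), so a mean-value expansion (componentwise) of the score $\nabla\log\hat\dens_{n,m}=\nabla\log\dens_n-\nabla D_n$ about $\hat\bftheta_n$ gives
\[
\hat\bftheta_{n,m}-\hat\bftheta_n=\big[\nabla^2\log\dens_n(\by;\bar\bftheta)\big]^{-1}\,\nabla D_n(\hat\bftheta_{n,m})
\]
for some $\bar\bftheta$ between them, and $\bar\bftheta\to\bftheta_0$ by Step 2. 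The asymptotic normality of $\hat\bftheta_n$ forces $\lambda_{\min}(\bA_n(\bftheta_0))\to\infty$ and the normalized Hessian $\bA_n^{-\top}\nabla^2\log\dens_n(\by;\bar\bftheta)\bA_n^{-1}\to-\bI_q$, whence
\[
\bA_n(\bftheta_0)(\hat\bftheta_{n,m}-\hat\bftheta_n)=\big[\bA_n^{-\top}\nabla^2\log\dens_n(\by;\bar\bftheta)\,\bA_n^{-1}\big]^{-1}\,\bA_n^{-\top}\,\nabla D_n(\hat\bftheta_{n,m}),
\]
whose norm is at most a bounded factor times $\lambda_{\min}(\bA_n(\bftheta_0))^{-1}\,\sup_\bftheta\|\nabla D_n(\bftheta)\|\to 0$ by Step 1. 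Therefore $\bA_n(\bftheta_0)(\hat\bftheta_{n,m}-\bftheta_0)=\bA_n(\bftheta_0)(\hat\bftheta_n-\bftheta_0)+o_p(1)\to\normal_q(\bfzero,\bI_q)$. The crux throughout is Step 1: only the super-polynomial (faster-than-any-polynomial) decay from Proposition~\ref{prop:klconv}, extended to derivatives, is delicate enough to make the trace sums vanish and thereby survive multiplication by the diverging normalization.
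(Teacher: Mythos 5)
Your route is genuinely different from the paper's, and in places more demanding. The paper proves this proposition in two short moves: Lemma~\ref{lemm:unifconv} establishes $\sup_{\bftheta\in\bfTheta}\,|\log\dens_n(\cdot;\bftheta)-\log\adens_{n,m}(\cdot;\bftheta)|\to 0$ in probability --- pointwise convergence from the KL bound of Proposition~\ref{prop:klconv} together with a variance computation for the Gaussian quadratic form (controlled through the eigenvalues of $\hat\bK_{n,m}^{-1}\bK_n$, Pinsker's inequality, and a total-variation result), and stochastic equicontinuity taken directly from the assumed uniform bound (A7) on first derivatives of \emph{both} log-likelihoods --- and the proof of the proposition then asserts that this uniform closeness yields $\hat\bftheta_{n,m}-\hat\bftheta_n\to 0$ and invokes Slutsky. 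Your Steps 2 and 3 supply machinery the paper leaves implicit: a well-separated-maximum argument for the argmax transfer, and, more importantly, the recognition that the normality claim requires $\bA_n(\bftheta_0)(\hat\bftheta_{n,m}-\hat\bftheta_n)\to 0$ against a \emph{diverging} normalization, which you obtain by letting the super-polynomial decay of the score difference beat the polynomial growth of $\bA_n$ through a mean-value expansion. That is a real gain in rigor over the published argument, which does not address the rate at all.

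The price is your Step 1, and there you have a genuine gap. You require uniform-in-$\bftheta$, super-polynomially decaying operator-norm bounds on $\partial_\bftheta^\alpha\bigl(\bK_n(\bftheta)^{-1}-\hat\bK_{n,m}(\bftheta)^{-1}\bigr)$ for $|\alpha|\le 2$. Neither Proposition~\ref{prop:klconv} nor the Sch\"afer et al.\ theory behind it says anything about $\bftheta$-derivatives of the factorization error, and differentiating those screening estimates in $\bftheta$ is not a routine check: the constants in the exponential-decay bounds depend on $c_\psi$, $s$, and $\|\mathcal{L}\|,\|\mathcal{L}^{-1}\|$, and you would have to show this dependence is differentiable and uniformly controlled over $\bfTheta$. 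Assumptions (A5)--(A7) give smoothness and an $L^1$ bound on first derivatives of the log-likelihoods, not quantitative derivative decay; the paper deliberately sidesteps this by using (A7) only for equicontinuity of the undifferentiated difference. A secondary unsupported step is your claim that asymptotic normality of $\hat\bftheta_n$ ``forces'' $\bA_n^{-\top}\nabla^2\log\dens_n(\by;\bar\bftheta)\,\bA_n^{-1}\to-\bI_q$; convergence of the normalized observed information at intermediate points is an additional local-asymptotic-normality-type condition, not a consequence of the stated hypotheses. So your architecture is sound and arguably repairs a weak joint in the paper's own proof, but as written it is contingent on a master estimate that is neither proved here nor available off the shelf.
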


Proposition \ref{prop:mvlebound} implies consistency and asymptotic normality of the ML estimator $\hat{\bftheta}_{n}$ based on exact GP inference imply those of the MVL estimator $\hat{\bftheta}_{n,m}$ based on Vecchia GP inference, given that $m$ grows at least polylogarithmically with $n$. Furthermore, it implies that the efficiency of the MVL estimators is identical to that of ML estimators. This means that asymptotically exact GP inference can be obtained in $\order (n \log^{3d} n)$ time using Vecchia approximation. As mentioned earlier, this complexity can be improved to $\order (n \log^{2d} n)$ time, for example by using the grouping strategy of \citet{Schafer2020}.  Our statement in Proposition \ref{prop:mvlebound} is inspired by Corollary 2 of \cite{Pedersen1995}.

\subsection{Vecchia approximation to GPs without boundary conditioning \label{sec:uncontGP}}

In various applications, including geostatistics and computer-model emulation, the validity of the boundary conditioning assumption is open to question. Although the theoretical results presented in Section \ref{sec:contGP} hold only for GPs with boundary conditioning and we do not have a rigorous analysis of asymptotic theory for GPs without boundary conditioning, we conjecture that Section \ref{sec:contGP} provides guidelines even for the use of Vecchia approximation to GPs without boundary conditioning. 

For GP prediction without boundary conditioning, we conjecture that the true predictive distribution of unobserved variable at new input can be inferred accurately by using its Vecchia approximation for sufficiently large $m$ and $n$, if the new input is not close to the boundary of the domain. This is motivated by the observation that conditional correlation maps obtained by the exact GP were nearly identical with those by its nearest-neighbor-based Vecchia approximation in Figures \ref{fig:corcond}, \ref{fig:corcond_n400}, and \ref{fig:corcond_n3600}, especially for larger $n$. In other words, because the conditional correlation with an interior point is close to zero beyond its nearest-neighbor conditioning set, removing the boundary conditioning should have a negligible effect. The conjecture is explored numerically in Section \ref{sec:numeric}.

A promising approach to investigate asymptotic properties of Vecchia-based parameter estimation without boundary conditioning is based on the theory of estimating equations, as suggested by \cite{Stein2004}. Note that \cite{Shaby2010} used a similar approach to study asymptotic properties of two-taper approximation. An estimating function for Vecchia approximation is the gradient of the log Vecchia-likelihood function $\log \adens_{n,m}$ with respect to $\bftheta$. Then, the robust information measure (or Godambe information) is defined by
\begin{equation}
    \mathcal{E}_{n,m} (\bftheta) = \mathbb{E}_{\bftheta} \Big[ \frac{\partial^2}{\partial \bftheta^2} \log \hat{p}_{n,m} \Big]^{\top} \mathbb{E}_{\bftheta} \Big[ \Big( \frac{\partial}{\partial \bftheta} \log \hat{p}_{n,m} \Big)^2 \Big]^{-1} \mathbb{E}_{\bftheta} \Big[ \frac{\partial^2}{\partial \bftheta^2} \log \hat{p}_{n,m} \Big].
\end{equation}
where $\left( \frac{\partial}{\partial \bftheta} \log \hat{p}_{n,m} \right)^2 = \left(\frac{\partial}{\partial \bftheta} \log \hat{p}_{n,m} \right) \left( \frac{\partial}{\partial \bftheta} \log \hat{p}_{n,m} \right)^{\top}$. For instance, the robust information measure is the Fisher information of the exact log-likelihood $\log \dens_n$ if $m = n-1$. See \cite{Heyde1997,Hall2014} for more details.

\begin{proposition}[Unbiased Vecchia-estimating function] \label{prop:unbias}
    The Vecchia-estimating equations are unbiased for any ordering and conditioning, that is, $$\mathbb{E}_{\bftheta} \Big[ \frac{\partial}{\partial \bftheta} \log \adens_{n,m} (\cdot , \bftheta)\Big] = \bfzero \, \text{ for any } \, \bftheta \in \bfTheta \setminus \partial \bfTheta.$$
\end{proposition}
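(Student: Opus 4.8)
The plan is to exploit the defining structure of the Vecchia likelihood \eqref{eq:vecchia}: it is a product of \emph{exact} conditional densities of the GP model, so the total Vecchia score splits into a sum of conditional scores, each of which has mean zero under $P_{\bftheta}$ by the classical score identity. Concretely, I would first differentiate the factorization to obtain
\begin{equation}
\frac{\partial}{\partial \bftheta} \log \adens_{n,m}(\by ; \bftheta) = \sum_{i=1}^n \frac{\partial}{\partial \bftheta} \log \dens_{\bftheta}(y_i \mid \by_{g(i)}),
\end{equation}
and then invoke linearity of expectation to reduce the claim to showing $\mathbb{E}_{\bftheta}\big[\frac{\partial}{\partial\bftheta}\log \dens_{\bftheta}(y_i\mid\by_{g(i)})\big] = \bfzero$ for each fixed $i$.

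For a single term I would condition on $\by_{g(i)}$ and apply the tower property. The crucial observation is that, although $g(i)$ is a \emph{reduced} conditioning set, $\dens_{\bftheta}(\,\cdot\mid\by_{g(i)})$ is nonetheless the genuine conditional density of $y_i$ given $\by_{g(i)}$ under the GP model, since the model's conditionals are internally consistent for any subset of the past indices. Consequently, the conditional law of $y_i$ given $\by_{g(i)}$ under $P_{\bftheta}$ is exactly this same density, and the inner conditional expectation becomes the integral of a score against its own density,
\begin{equation}
\mathbb{E}_{\bftheta}\Big[ \frac{\partial}{\partial\bftheta}\log \dens_{\bftheta}(y_i\mid\by_{g(i)}) \,\Big|\, \by_{g(i)}\Big] = \int \frac{\partial}{\partial\bftheta} \dens_{\bftheta}(y_i\mid\by_{g(i)}) \, dy_i,
\end{equation}
where I have used the identity $\big(\frac{\partial}{\partial\bftheta}\log \dens_{\bftheta}\big)\, \dens_{\bftheta} = \frac{\partial}{\partial\bftheta}\dens_{\bftheta}$.

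It then remains to interchange differentiation and integration, so that the right-hand side equals $\frac{\partial}{\partial\bftheta}\int \dens_{\bftheta}(y_i\mid\by_{g(i)})\,dy_i = \frac{\partial}{\partial\bftheta} 1 = \bfzero$. Taking the outer expectation over $\by_{g(i)}$ yields $\bfzero$ for each $i$, and summing over $i$ completes the argument. I expect this differentiation-under-the-integral step to be the only nontrivial point, but it is routine here: each $\dens_{\bftheta}(\,\cdot\mid\by_{g(i)})$ is a univariate Gaussian density whose mean and variance are smooth in $\bftheta$ (differentiability is guaranteed by (A6)), and its $\bftheta$-derivative is dominated by an integrable envelope locally uniformly in $\bftheta$ on the interior of $\bfTheta$, so dominated convergence applies. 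The restriction $\bftheta\in\bfTheta\setminus\partial\bfTheta$ ensures a genuine two-sided derivative and rules out boundary effects. I would emphasize that the argument nowhere uses the particular ordering or the choice of conditioning sets $g(i)$—only that each factor is an exact model conditional—which is precisely why unbiasedness holds for \emph{any} ordering and conditioning.
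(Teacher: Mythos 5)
Your argument is correct, and it takes a genuinely different route from the paper's. You treat each Vecchia factor as an exact model conditional and invoke the generic composite-likelihood score identity: condition on $\by_{g(i)}$, use the tower property, rewrite the conditional score as $\int \frac{\partial}{\partial\bftheta}\dens_{\bftheta}(y_i\mid\by_{g(i)})\,dy_i$, and interchange differentiation with integration to get $\frac{\partial}{\partial\bftheta}1=\bfzero$. The paper instead computes everything explicitly in the Gaussian case: it writes $\log\adens_{n,m}$ as $-\frac{1}{2}\sum_i[\log\sigma^2_{i|g(i)} + u^2_{i|g(i)}/\sigma^2_{i|g(i)}]$ with $u_{i|g(i)} = y_i - \bK_{i,g(i)}\bK_{g(i),g(i)}^{-1}\by_{g(i)}$, verifies $\mathbb{E}_{\bftheta}[u_{i|g(i)}]=0$, $\mathbb{E}_{\bftheta}[u^2_{i|g(i)}]=\sigma^2_{i|g(i)}$, and the independence of $u_{i|g(i)}$ from every linear contrast of $\by_{g(i)}$ (hence from $\partial u_{i|g(i)}/\partial\theta_k$), and then shows the terms of the differentiated log-likelihood cancel in expectation. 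Your approach is shorter, does not use Gaussianity at all (it would work for any model whose factors are exact conditionals), and makes transparent why the result is independent of the ordering and of the choice of $g(i)$; its one technical obligation is the differentiation-under-the-integral step, which you correctly identify and which is routine here since each factor is a univariate Gaussian with mean and variance smooth in $\bftheta$ and conditional variance locally bounded away from zero on the interior of $\bfTheta$. The paper's computation buys the opposite trade: it avoids any appeal to interchanging limits because all expectations are explicit second moments, and as a by-product it records the orthogonality of the conditional residuals to the conditioning variables, which is the Gaussian mechanism underlying the cancellation. Both proofs are complete; yours is the more general and arguably the more illuminating one.
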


Note that Proposition \ref{prop:unbias} and its proof are not new but are motivated by \cite{Stein2004}. Since the estimating equations for Vecchia GP inference are unbiased, the robust information measure can be used to obtain the asymptotic sampling distribution of the MVL estimator $\hat{\bftheta}_{n,m}$: Under some regularity conditions, for sufficiently large $m$ and $n$, we conjecture that
$$\hat{\bftheta}_{n,m} \sim \normal_q (\bftheta_0 , \mathcal{E}_{n,m}^{-1} (\bftheta_0)) \quad \text{approximately}.$$ 
To provide further intuition, note that we can write the Vecchia loglikelihood as
\begin{align}
\log \hat{\dens}_{n,m}(\by ; \bftheta) &= \sum_{i=1}^n \log \dens_{\bftheta}(y_i|\by_{g(i)}) \\
&= \sum_{i \in \mathcal{I}} \log \dens_{\bftheta}(y_i|\by_{g(i)}) + \sum_{i \in \mathcal{B}} \log \dens_{\bftheta}(y_i|\by_{g(i)}),
\end{align}
where $\mathcal{I}$ and $\mathcal{B}$ are sets of indices corresponding to interior points and those close to the boundary, respectively. Most of the summands (and their derivatives with respect to $\bftheta$) correspond to GP predictions for interior observations, for which removing the boundary conditioning should have little effect, as argued above, resulting in $\dens_{\bftheta}(y_i|\by_{g(i)}) \approx \dens_{\bftheta}(y_i|\by_{h(i)})$. For boundary points, the summands still result in unbiased estimating equations (see Proposition 4).
Our numerical experiments in Section \ref{sec:numeric} suggest that the MVL estimators of the covariance parameters can be very accurate for $m$ growing only polylogarithmically with $n$, even without boundary conditioning.


\section{Numerical experiments \label{sec:numeric}}

We conducted simulation experiments to illustrate fixed-domain asymptotic properties of Vecchia approximation. We considered zero-mean GPs with Mat\'ern covariance function \citep{Stein1999} 
\begin{equation}
    K(\bx , \bx') = \sigma^2 \frac{\pi^{1/2} r^{2\nu}}{2^{\nu-1} \Gamma(\nu + 1/2)} \left( \frac{\| \bx - \bx' \|}{r} \right)^{\nu} B_{\nu} \left( \frac{\| \bx - \bx' \|}{r} \right),
\end{equation}
where $\Gamma$ is the gamma function, $B_\nu$ is the modified Bessel function of the second kind, with variance $\sigma^2 = 1$, range $r = 0.1$, and smoothness $\nu = 0.5$, on the unit-square domain $\inputdomain = [0, 1]^2$. The inputs were obtained on regular grids. Note that we did not condition the GPs on the boundary for our numerical experiments.

We compared the following GP approximations: Vecchia approximation, one-taper approximation with decreasing or fixed taper range, two-taper approximation with decreasing or fixed taper range, and reduced-rank approximation with inducing points. We also compared to the exact GP if possible. The size of conditioning sets, $m$, was set to be the maximum integer smaller than $1.2^d \log^d_{10} (n)$. For the covariance tapering approaches, we used the compactly supported polynomials proposed by \cite{Wendland1995,Wendland1998} and the fixed taper range was set to be equal to the range $r = 0.1$ of the Mat\'ern covariance function. The values of decreasing taper range were chosen in order to compare taper and Vecchia approximations at similar computational complexity parameter values, while their computing times were not used for comparison. Vecchia approximation has quasilinear time complexities in $n$ with our experiment setup, while fixed-range covariance tapering has the same order of computational complexity as the exact GP. Details on asymptotic properties of maximum one-tapered-likelihood estimators can be found in \citet{Du2009} and \citet{Wang2011} and details of maximum two-tapered-likelihood estimators in \cite{Kaufman2008} and \cite{Shaby2010}. For the reduced-rank approach, the number of inducing points were set to be approximately equal to the average size of conditioning sets of Vecchia approximation, and the inducing points were chosen as the first $m$ ordered inputs in the maximin ordering, since they formed a domain-covering set \citep{Guinness2016a}. 

Figure \ref{fig:comparison_prediction} shows predictive performances of taper, reduced-rank, and Vecchia approximations at the center of the domain. Exact-GP prediction results are also plotted for the baseline. For the tapering approaches, since GP prediction does not directly use sample covariance matrices (but uses model covariance matrices), predictions from one-taper approximation are identical with predictions from two-taper approximation. While predictions from covariance tapering with decreasing taper range performed reasonably well in terms of mean square prediction error (MSPE) and variance of predictive distribution, they did not improve for increasing $n$ in terms of KL divergence to predictive distribution based on exact GP. Predictions from tapering with fixed taper range performed better, but their computational cost can be prohibitive for very large $n$, according to the top right panel. We found that predictions from reduced-rank approximation were unstable and also became less accurate for larger $n$. Predictions from Vecchia approximation outperformed those from the competing methods and they were almost identically accurate with those based on exact GP. Furthermore, the KL divergence between predictive distributions at $\bx_{n+1}$ based on Vecchia and exact GPs was decreasing with $n$, which supports our theoretical finding in Proposition \ref{prop:predbound} and the conjecture in Section \ref{sec:uncontGP}, implying that exact GP prediction can be approximated accurately and efficiently by Vecchia GP prediction even without boundary conditioning. 

\begin{figure}[htbp]
    \centering
    \includegraphics[width=\textwidth]{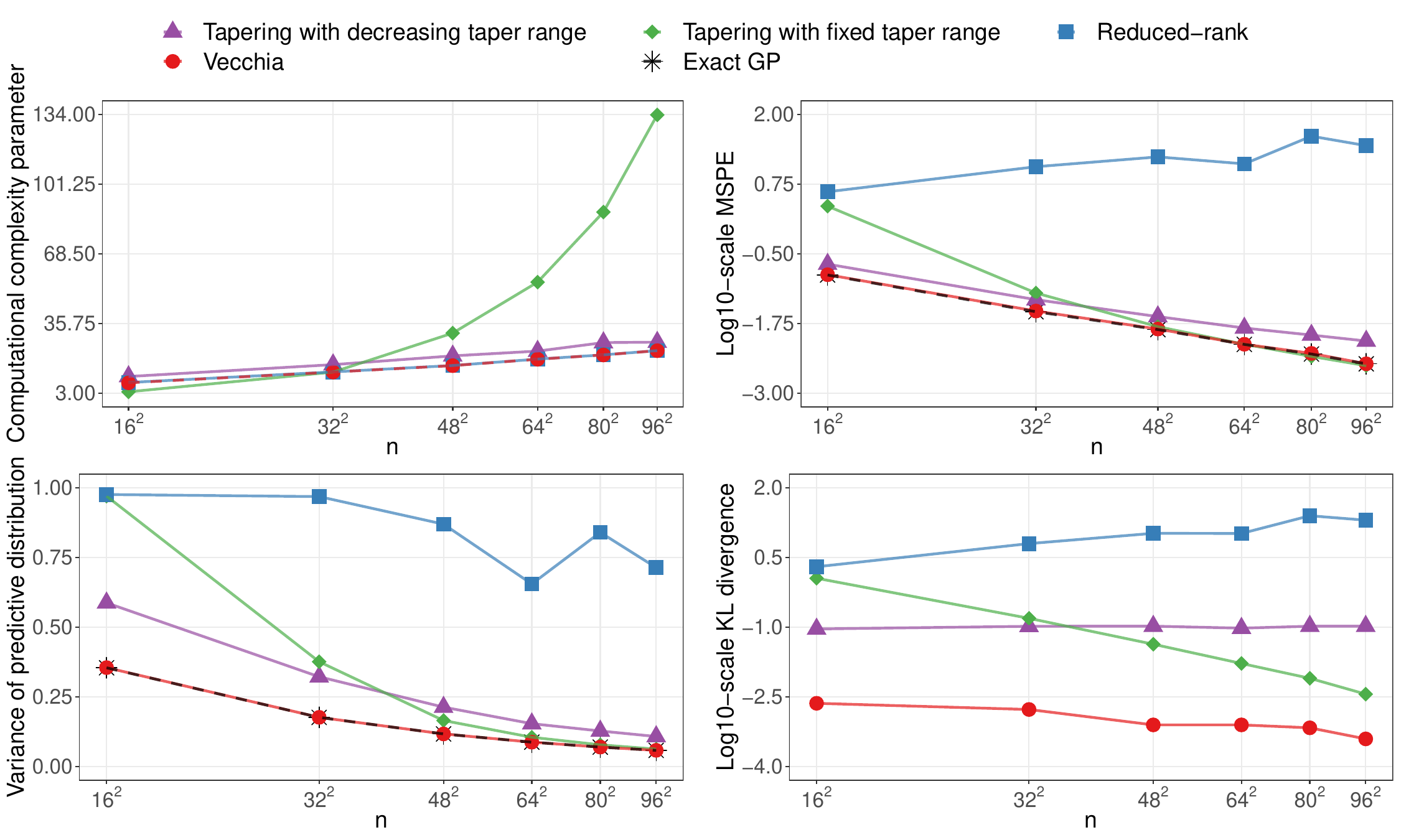}
    \caption{Comparison of predictive performances of taper, reduced-rank, and Vecchia GP approximations at $\bx_{n+1} = (0.5, 0.5) \in [0,1]^2$ for 400 synthetic datasets: The top left panel compares the average number of non-zero entries of covariance matrix per observation for taper approximations, the number of inducing points for reduced-rank approximation, and the average size of conditioning set for Vecchia approximation. The top-right and bottom-left panels present log-scale mean square prediction error (MSPE) and variance of the predictive distributions, respectively. The bottom right panel compares KL divergences between $p(y_{n+1} | \by_{1:n})$ and $\hat{p}(y_{n+1} | \by_{1:n})$ based on the different approximations. Note that all the parameters were assumed to be known and the $x$-axes of the panels are on a log scale.}
    \label{fig:comparison_prediction}
\end{figure}

Figure \ref{fig:comparison_mle} compares root mean square error (RMSE) for estimating the covariance parameters $\sigma^2$ and $r$ based on taper, reduced-rank, and Vecchia approximations. Also, as a baseline, RMSE for exact GP inference is plotted. The RMSE values for Vecchia approximation were nearly identical to those for exact GP, which implies that Vecchia approximation with polylogarithmically increasing $m$ does not impair efficiency of maximum GP-likelihood estimation, even without boundary conditioning. As pointed out by \cite{Kaufman2008}, one-taper approximation can be severely biased, resulting in very large RMSE values. The RMSE values for two-taper and reduced-rank approximations were higher than those for Vecchia approximation and exact GP; in addition, an important finding is that RMSE for two-taper approximation with decreasing range and for reduced-rank approximation did not decrease for increasing $n$. Although the two-taper approximation with fixed range resulted in decreasing RMSE for increasing $n$, this approximation is computationally intractable for very large $n$ (and has the same cubic complexity as the exact GP). For a more detailed assessment of bias and variance of the parameter estimates, see Figure \ref{fig:comparison_mle_boxplot} showing box plots of the estimates and $90\%$ intervals of their sampling distributions.

\begin{figure}[htbp]
    \centering
    \includegraphics[width=\textwidth]{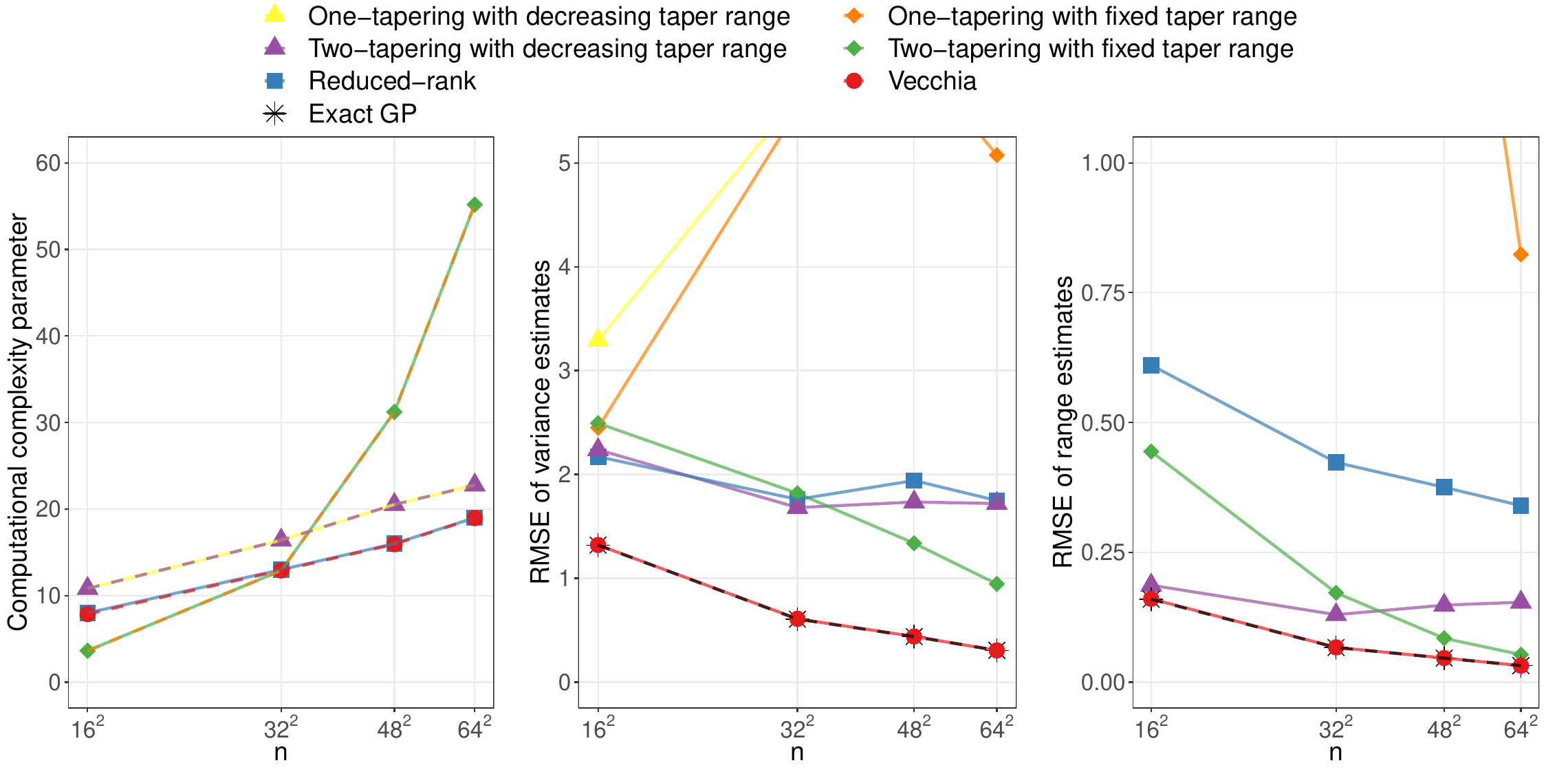}
    \caption{Comparison of maximum-approximate-likelihood estimators based on taper, reduced-rank, and Vecchia approximations: The left panel compares the average number of non-zero entries of covariance matrix per observation for taper approximations, the number of inducing points for reduced-rank approximation, and the average size of conditioning set for Vecchia approximation. The center (right) panel shows RMSE for estimating the variance (range) parameter based on exact and approximate GP likelihoods for 200 synthetic datasets. Vecchia and exact-GP RMSEs were nearly identical. While estimating a parameter, the other parameter was assumed to be known. Data were generated on a regular grid of $n$ inputs on the unit square domain from a GP with Mat\'ern covariance with variance $1$, range $0.1$, and smoothness $0.5$. Note that the $x$-axes of the panels are on a log scale.}
    \label{fig:comparison_mle}
\end{figure}


\section{Discussion \label{sec:conc}}

We have studied fixed-domain asymptotic properties of Vecchia approximation for Gaussian processes. We provided new findings suggesting the following: Given that the size of conditioning sets grows polylogarithmically with the data size, consistency and asymptotic normality of maximum exact-likelihood (ML) estimators imply those of maximum Vecchia-likelihood (MVL) estimators and exact GP prediction can be approximated accurately by Vecchia GP prediction under boundary conditioning. These findings were supported by our numerical experiments, which also showed that Vecchia approximation outperformed its alternatives such as covariance tapering and reduced-rank approximation for both GP prediction and parameter estimation. These suggest that based on the screening effect, Vecchia-based GP inference with quasilinear complexity is asymptotically equivalent to exact GP inference with cubic complexity.

While we largely focused on fixed-domain asymptotics with no nugget effect but with boundary conditioning here, we believe our findings can inspire new theoretical and experimental investigations in different settings, since our approach yields a general framework for studying asymptotic properties of likelihood approximation techniques. For instance, \cite{Mardia1984} provided interesting results concerning the consistency and asymptotic normality of ML estimators of model parameters under increasing-domain asymptotics. This can a good starting point for studying increasing-domain asymptotic properties of Vecchia approximation, as we did for fixed-domain asymptotics in this paper. Furthermore, for a GP with additive noise, one can use the Vecchia approach to approximate a latent noise-free GP, as suggested by \cite{Schafer2020} and \cite{Kang2021}. While this introduces additional computational challenges, it has been shown numerically that using the incomplete Cholesky factorization (IC) introduces only negligible additional error. This can be useful for studying asymptotic properties of Vecchia approximation for a given noise or nugget term.

\footnotesize
\appendix
\section*{Acknowledgments}

Matthias Katzfuss's research was partially supported by National Science Foundation (NSF) Grants DMS--1654083 and DMS--1953005 and by the Office of the Vice Chancellor for Research and Graduate Education at the University of Wisconsin--Madison with funding from the Wisconsin Alumni Research Foundation. Florian Sch{\"a}fer's research was supported by the Office of Naval Research under grant N00014-23-1-2545. Myeongjong Kang is currently affiliated with Merck $\&$ Co. Inc.,
North Wales, PA, USA.

\bibliographystyle{apalike}
\bibliography{ref}


\newpage
\emptythanks

\setcounter{section}{0}
\setcounter{equation}{0}
\setcounter{figure}{0}
\def\theequation{S\arabic{section}.\arabic{equation}}
\def\thesection{S\arabic{section}}

\renewcommand{\thetable}{S\arabic{table}}  
\renewcommand{\thefigure}{S\arabic{figure}}

\title{Supplementary material for ``Asymptotic properties of Vecchia approximation for Gaussian processes"}

\author{Myeongjong Kang\thanks{Department of Statistics, Texas A\&M University} \and Florian Sch\"afer\thanks{School of Computational Science and Engineering  Georgia Institute of Technology} \and Joseph Guinness\thanks{Department of Statistics and Data Science, Cornell University} \and Matthias Katzfuss\thanks{Department of Statistics, University of Wisconsin--Madison (\texttt{katzfuss@gmail.com})}}

\date{}


\maketitle


\section{Proofs \label{app:proofs}}

This section contains the proofs of technical statements. 

\begin{proof}[Proof of Proposition \ref{prop:klconv}]
In order to clarify the boundary assumption (A4), we need to defined a GP conditioned on an infinite number of observations (e.g., all variables outside of $\inputdomain$). 
Therefore, it is not enough to treat the covariance function simply as a machine that produces covariance matrices for each finite set of inputs, but instead we have to consider the full infinite-dimensional Gaussian measures induced by GPs. As detailed in \cite{Owhadi2015}, a centered Gaussian measure $P$ on a Hilbert space $H$ is characterized by a covariance operator $\mathcal{G}:H \longrightarrow H$ such that  
\begin{equation}
    \left \langle h_1, \mathcal{G} h_2 \right \rangle = \int_H  \left \langle h_1 , \bx\right \rangle \left \langle h_2, \bx\right \rangle \mathrm{d} P(\bx), \quad ^{\forall} h_1, h_2 \in H,
\end{equation}
where $\langle \cdot, \cdot \rangle$ is the inner product on $H$.
By the Riesz representation theorem, we can equivalently consider $\mathcal{G}$ as a map from the dual $H^*$ to $H$, such that 
\begin{equation}
    \left[ \phi_1, \mathcal{G} \phi_2 \right] = \int_H  \left[\phi_1 , \bx\right] \left[ \phi_2, \bx\right] \mathrm{d} P(\bx), \quad ^{\forall} \phi_1, \phi_2 \in H^*.
\end{equation}
In applications, the operator $\mathcal{G}$ is given as the convolution with a covariance function $K: \mathbb{R}^d \times \mathbb{R}^d \longrightarrow \mathbb{R}$ and the most common measurements $\phi_{1}, \phi_{2}$ are Dirac measures in points $\bx, \bx' \in \mathbb{R}^d$, which implies that
\begin{align}
    \left[ \phi_1, \mathcal{G} \phi_2 \right] 
    &= \int_{\mathbb{R}^d} \phi_{1}(\bz) \int_{\mathbb{R}^d} K(\bz, \bz') \phi_{2}(\bz') \mathrm{d} \bz' \mathrm{d} \bz \\
    &= \int_{\mathbb{R}^d} \delta_{\bx}(\bz)\int_{\mathbb{R}^d} K(\bz, \bz') \delta_{\bx'}( \bz' ) \mathrm{d} \bz' \mathrm{d} \bz = K(\bx, \bx').
\end{align}
Gaussian measures can be conditioned on closed subspaces $S \subset H^*$, resulting in a conditional covariance operator $\mathcal{G}|S$ given by the shorted operator \citep{Owhadi2015}, which is characterized as
$\left[ \phi, \mathcal{G}|S \phi \right] = \inf_{\varphi \in S} \left[\left(\phi + \varphi\right), \mathcal{G}\left(\phi + \varphi\right)\right].$
For our purpose, the Hilbert space $H$ will be the Sobolev space $H_0^s(\mathbb{R}^d)$, which is the closure of the space of smooth and compactly supported functions on $\inputdomain$, under the norm induced by the inner product
\begin{equation}
\left\langle u, v \right \rangle_{H^s\left(\mathbb{R}^d\right)} \coloneqq \sum_{|\alpha| \leq s} \int_{\mathbb{R}^{d}} \langle D^{\alpha} u(\bx), D^{\alpha} v(\bx) \rangle \mathrm{d} \bx. 
\end{equation}
The covariance operator $\mathcal{G}$ is defined in terms of the covariance function $K$ as follows 
\begin{equation}
\left(\mathcal{G} \phi\right)(\bx) = (K * \phi)(\bx) =  \left[ \phi, K(\bx - \cdot) \right], \quad ^{\forall} \bx \in \mathbb{R}^d, \ \phi \in H^{-s}(\mathbb{R}^d). 
\end{equation} 
As is customary, we denote the dual space of $H_0^s(\mathbb{R}^d)$ (with respect to the $L^2$ duality pairing) as $H^{-s}(\mathbb{R}^d)$.
The space $S \subset H^{-s}\left(\mathbb{R}\right)$ representing conditioning on the complement of $\inputdomain$ is given by the closure of the set $\left\{\delta_{\bx}: \bx \in \mathbb{R}^d \setminus \inputdomain \right\}$ in $H^{-s}\left(\mathbb{R}^d\right)$. 

We have to show that under the assumptions (A1) - (A4), we are in the setting considered by Theorem 4.1 of \cite{Schafer2017}. As the first step, we need to show that the covariance operator $\mathcal{G}$ of a GP that satisfies (A3) and (A4) has the following properties:
\begin{enumerate}
    \item[($i$)] $\mathcal{G}$ is a linear and invertible continuous map from $H^{-s}(\inputdomain)$ to $H_0^s(\inputdomain)$. 
    \item[($ii$)] The precision operator $\mathcal{L} \coloneqq \mathcal{G}^{-1}$ is symmetric and non-negative, in the sense that 
    \begin{equation}
        \int_{\inputdomain} u \mathcal{L} v \mathrm{d} \bx = \int_{\inputdomain} v \mathcal{L} u \mathrm{d} \bx \quad \text{and} \quad \int_{\inputdomain} u \mathcal{L} u \mathrm{d} \bx \geq 0, \quad ^{\forall} u,v \in H^s_{0}(\inputdomain)
    \end{equation}
    \item[($iii$)] The precision operator is local, in the sense that 
    \begin{equation}
        \int_{\inputdomain} u \mathcal{L} v \mathrm{d} \bx = 0, \ ^{\forall} u,v \in H_0^s(\inputdomain), \quad \text{such that} \quad \operatorname{supp}(u) \cap \operatorname{supp}(v) = \emptyset.
    \end{equation}
\end{enumerate}
We begin by showing that under the assumption (A3), the operator satisfies the above properties with $\inputdomain = \mathbb{R}^d$.
We then show that after conditioning on the values of the process on $\mathbb{R}^d \setminus \inputdomain$, we recover obtain the above properties. By the assumption (A3), application of $\mathcal{G}$ to a function $u$ in Schwartz space amounts to dividing its Fourier transform by the polynomial $\psi$.
By the equivalence of polynomial multiplication in Fourier space and differentiation, the precision operator $\mathcal{L} = \mathcal{G}^{-1}$ is therefore the differential operator
\begin{equation}
    \mathcal{L} = \sum_{|\alpha| \leq s} (-1)^{|\alpha|} a_{\alpha} D^{2 \alpha} u
\end{equation}
which we consider as a mapping by from $H^s_0\left(\mathbb{R}^d\right)$ to  $H^{-s}\left(\mathbb{R}^d\right)$ as 
\begin{align}
    \left[\mathcal{L}u, v\right] 
    &\coloneqq \left \langle u, v\right \rangle_{\mathcal{L}} \\ 
    &\coloneqq \int_{\mathbb{R}^d} u(\bx) \mathcal{L} v(\bx) \mathrm{d}\bx \\
    &= \sum_{|\alpha| \leq s} a_{\alpha} \int_{\mathbb{R}^d} \langle D^\alpha u(\bx), D^{\alpha} v(\bx) \rangle \mathrm{d}\bx, \quad ^{\forall} u,v \in H_0^s\left(\mathbb{R}^d\right),
\end{align}
where the equality follows from integration by parts, implying symmetry and positivity of $\mathcal{L}$. 
According to the lower bound on $\psi$ in (A3), there exists a constant $C_{\mathcal{L}}$ depending only on $c_{\psi}$, $s$, and $d$, such that 
\begin{equation}
\frac{1}{C_{\mathcal{L}}} \|u\|_{H^s\left(\mathbb{R}^d\right)} \leq \|u\|_{\mathcal{L}} \leq C_{\mathcal{L}} \|u\|_{H^s\left(\mathbb{R}^d\right)}.
\end{equation}
Thus, $\left\langle \cdot, \cdot \right \rangle_{\mathcal{L}}$ is an inner product equivalent to the $H^s$ inner product $\left \langle \cdot, \cdot \right \rangle_{H^s\left( \mathbb{R}^d \right)}$.
By the Riesz representation theorem, it also follows that $\mathcal{L}$ is surjective.
The equivalence of $\|\cdot\|_{\mathcal{L}}$ and $\|\cdot \|_{H^s}$ implies the boundedness of both $\mathcal{L}$ and $\mathcal{G}$. 
As a differential operator, $\mathcal{L}$ is furthermore local, and thus $\mathcal{L}$ and $\mathcal{G}$ satisfy properties ($i$) - ($iii$) above with $\mathbb{R}^d$. 

We now want to show that after conditioning the Gaussian measure on the complement $\mathbb{R}^d \setminus \inputdomain$, the conditional precision operator is given by the restriction of $\mathcal{L}$ to $H_0^s\left(\inputdomain\right)$.
To this end, we use duality to rewrite the definition of the shorted operator $\mathcal{G}|S$ as 
\begin{align}
        \frac{\left[ \phi, \mathcal{G}|S \phi \right]}{2} &= \inf_{\varphi \in S}  \frac{\left[\left(\phi + \varphi\right), \mathcal{G}\left(\phi + \varphi\right)\right]}{2} \\
        &= \inf_{\varphi \in S} \sup_{u \in H} \left[\left(\phi + \varphi\right), u\right] - \frac{\left[\mathcal{L}u, u\right]}{2} \\
        &\geq \sup_{u \in H} \inf_{\varphi \in S}  \left[\left(\phi + \varphi\right), u\right] - \frac{\left[\mathcal{L}u, u\right]}{2}.
\end{align}
At the same time,  we also have 
\begin{align}
        \frac{\left[ \phi, \mathcal{G}|S \phi \right]}{2}  
        &= \inf_{\varphi \in S} \left[\left(\phi + \varphi\right), \mathcal{G}\left(\phi + \varphi\right)\right] - \frac{\left[\mathcal{L}\mathcal{G}\left(\phi + \varphi\right), \mathcal{G}\left(\phi + \varphi\right)\right]}{2} \\
        &\leq 
        \sup_{u \in H} \inf_{\varphi \in S} \left[\left(\phi + \varphi\right), u\right] - \frac{\left[\mathcal{L}u, u\right]}{2}.
\end{align}
Together, we have
\begin{equation}
        \frac{\left[ \phi, \mathcal{G}|S \phi \right]}{2} =  
        \sup_{u \in H} \inf_{\varphi \in S}  \left[\left(\phi + \varphi\right), u\right] - \left[\mathcal{L}u, u\right] 
        = \sup_{u \in S^{\perp}} \left[\phi, u\right] - \frac{\left[\mathcal{L}u, u\right]}{2}, 
\end{equation}
where $S^{\perp}$ denotes the orthogonal complement of $S$.
Since $s > d$, $H^s\left(\mathbb{R}^d\right)$ embeds into the continuous functions and therefore $S^{\perp}$ is given by exactly those function in $H^{s}\left(\mathbb{R}^d\right)$ that are equal to zero, outside of $\inputdomain$. By Theorem 5.11 of \cite{Brewster2014}, these are exactly the extensions with zero of functions in $H^s_0\left(\inputdomain\right)$. 
For $\phi \in H^{-s}\left(\inputdomain\right) = \left(H_{0}\left(\inputdomain\right)\right)^*$, this results in the expression 
\begin{equation}
        \frac{\left[ \phi, \mathcal{G}|S \phi \right]}{2} = \sup_{u \in H_0\left(\inputdomain\right)} \left[\phi, u\right] - \left[\mathcal{L}u, u\right] = \left(\mathcal{L}\big|_{H_0^s\left(\inputdomain\right)}\right)^{-1} \phi,
\end{equation}
where $\mathcal{L}\big|_{H_0\left(\inputdomain\right)}: H_0^s\left(\inputdomain\right) \longrightarrow H^{-s}\left(\inputdomain\right)$ is the restriction of $\mathcal{L}$ to $H_{0}\left(\inputdomain\right)$.
Since $\mathcal{L}\big|_{H_0\left(\inputdomain\right)}$ inherits locality, symmetry, and positivity from $\mathcal{L}$, we have thus shown that the conditional covariance operator $\mathcal{G}|S$ satisfies the assumptions of Theorem 4.1 in \cite{Schafer2017} and therefore also of Theorem 3.4 in \cite{Schafer2020}, which is based on the former.
According to Theorem 3.4 of \cite{Schafer2017}, there exists a constant $c > 0$ depending on $d$, $\inputdomain$, $s$, $\|\mathcal{L}\|$ and $\|\mathcal{L}^{-1}\|$, such that for $\rho \geq c \log(n / \epsilon)$, 
\begin{equation}
   \KL(P_n\|\hat{P}_{n,\rho}) \leq \epsilon,
\end{equation}
where $P_n$ is the Gaussian measure with the multivariate normal density $\dens_{n} (\cdot , \bftheta)$ under (A4) and $\hat{P}_{n, \rho}$ is the nearest-neighbor-based Vecchia approximation of $P_n$ computed by using the radius-based conditioning sets $s_i = \{j \leq i: \| \bx_i - \bx_j \| \leq \rho \ell_i\}$, for $i = 1, \ldots , n$ \citep{Schafer2020}.
Here, the length scale $\ell_i$ is defined as $\ell_i = \min_{j < i} \| \bx_i - \bx_j \|$. 
That is, the length scale of $\bx_i$ is given by the distance to the closest point $\bx_j$ appearing before $\bx_i$ in the maximin ordering. 
All points appearing before $\bx_i$ in the maximin ordering have at least distance $\ell_i$ from one another. Furthermore, they have to cover $\inputdomain$ up to balls of size $(1 + \delta_n) \ell_i$, since otherwise the maximin ordering would have chosen another point instead of $\bx_i$. 
Thus, there exists a constant $c_{\delta_n} > 0$ depending only on $\delta_n$, such that for $m > c_{\delta_n} \rho^d$, the (size-based) conditioning set is a superset of the radius-based conditioning set $s_i$. 
Since the KL divergence only decreases upon increasing the conditioning set,  

$$
^{\exists}\, c>0 \text{ such that } \KL(P_n\|\hat{P}_{n,m}) \leq \epsilon \text{ for } m \geq c \log^d(n/\epsilon),
$$
where $\hat{P}_{n,m}$ is the Gaussian measure with the Vecchia likelihood $\adens_{n,m} (\cdot , \bftheta)$ under (A4). Thus, for $\epsilon = n^{-k}$ with any fixed $k \in \mathbb{Z}^+$, we have
$$
^{\exists}\, c>0 \text{ such that } \KL(P_n\|\hat{P}_{n,m}) \leq 1/n^k \text{ for } m \geq 
(k+1)^d c \log^d(n),
$$
where $(k+1)^d$ can be absorbed into the constant. Hence, we have 
$$
m = \knuth(\log^d n) \; \Rightarrow \; \KL(P_n\|\hat{P}_{n,m}) \rightarrow 0 \; \text{ as } \; n \rightarrow \infty,
$$
where convergence is faster than a polynomial of any fixed order.
\end{proof}

\begin{lemma}\label{lemm:regbound}
Assume (A1) - (A4). For any $\bftheta \in \bfTheta$,
\begin{equation}
    m = \knuth(\log^d n) \; \Rightarrow \; \KL \Big( \dens_{\bftheta} (y_i | \by_{h(i)}) \,\Big\|\, \dens_{\bftheta} (y_i | \by_{g(i)}) \Big) \rightarrow 0 \; \text{ as } \; n \rightarrow \infty,
\end{equation}
for all $i = 1, \ldots, n$, where the KL divergence is defined between Gaussian measures with densities $\dens_{\bftheta} (y_i | \by_{h(i)})$ and $\dens_{\bftheta} (y_i | \by_{g(i)})$ under (A4).
\end{lemma}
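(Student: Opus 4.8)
The plan is to deduce the lemma from Proposition \ref{prop:klconv} by means of the chain-rule decomposition of the Kullback--Leibler divergence along the conditional factorization. Both the exact density $\dens_n(\by;\bftheta) = \prod_{i=1}^n \dens_{\bftheta}(y_i|\by_{h(i)})$ and the Vecchia density $\adens_{n,m}(\by;\bftheta) = \prod_{i=1}^n \dens_{\bftheta}(y_i|\by_{g(i)})$ factor over the same maximin ordering with $g(i)\subset h(i)$, so the first step is to establish
\begin{equation}
\KL\big(\dens_{n}(\cdot;\bftheta)\,\|\,\adens_{n,m}(\cdot;\bftheta)\big) = \sum_{i=1}^n \mathbb{E}_{\by_{h(i)}}\Big[\KL\big(\dens_{\bftheta}(y_i|\by_{h(i)})\,\|\,\dens_{\bftheta}(y_i|\by_{g(i)})\big)\Big],
\end{equation}
where the expectation is over $\by_{h(i)}$ under $P_{\bftheta_0}$. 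To justify this I would expand $\log(\dens_n/\adens_{n,m})$ as a sum of log-ratios of conditional densities, observe that the $i$th log-ratio depends only on $\by_{1:i}$, integrate out $\by_{i+1:n}$ against the true marginal, and recognize the remaining inner integral over $y_i$ (with $\by_{h(i)}$, and hence $\by_{g(i)}$, held fixed) as exactly the conditional KL divergence appearing in the lemma.

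With the decomposition in hand, I would invoke Proposition \ref{prop:klconv}: under (A1)--(A4), for $m = \knuth(\log^d n)$ the left-hand side is bounded by $\epsilon = n^{-k}$ for any fixed $k$ and tends to $0$ as $n\to\infty$. Since every summand is non-negative, the maximum over $i$ is dominated by the full sum, giving
\begin{equation}
\max_{1\le i \le n} \mathbb{E}_{\by_{h(i)}}\Big[\KL\big(\dens_{\bftheta}(y_i|\by_{h(i)})\,\|\,\dens_{\bftheta}(y_i|\by_{g(i)})\big)\Big] \;\le\; \KL\big(\dens_{n}(\cdot;\bftheta)\,\|\,\adens_{n,m}(\cdot;\bftheta)\big) \longrightarrow 0,
\end{equation}
so the expected conditional KL divergence vanishes uniformly in $i$, which is the quantitative heart of the screening effect here.

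To convert this into the per-realization statement, I would use the Gaussian structure. For a GP the conditional variances $\sigma_{h,i}^2 = \var(y_i|\by_{h(i)})$ and $\sigma_{g,i}^2 = \var(y_i|\by_{g(i)})$ are deterministic, while the conditional means differ by a data-dependent term, so the univariate Gaussian KL reads
\begin{equation}
\KL\big(\dens_{\bftheta}(y_i|\by_{h(i)})\,\|\,\dens_{\bftheta}(y_i|\by_{g(i)})\big) = \tfrac12\log\frac{\sigma_{g,i}^2}{\sigma_{h,i}^2} + \frac{\sigma_{h,i}^2 - \sigma_{g,i}^2}{2\sigma_{g,i}^2} + \frac{(\mu_{h,i}-\mu_{g,i})^2}{2\sigma_{g,i}^2}.
\end{equation}
Taking expectations and applying the law of total variance, $\mathbb{E}[(\mu_{h,i}-\mu_{g,i})^2] = \sigma_{g,i}^2 - \sigma_{h,i}^2$, collapses the $i$th summand to $\tfrac12\log(\sigma_{g,i}^2/\sigma_{h,i}^2)$, so the uniform bound above forces the variance ratios to $1$. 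Finally, since the conditional KL is non-negative and its expectation tends to $0$ uniformly in $i$, it converges to $0$ in $L^1$ and hence in probability, which is the precise sense in which the (data-dependent) conditional KL divergence in the statement vanishes.

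The heavy analytic lifting---placing the boundary-conditioned covariance operator in the framework of \cite{Schafer2017,Schafer2020} and obtaining the polylogarithmic KL bound---is already done in Proposition \ref{prop:klconv}, so the remaining work is largely bookkeeping; note that (A2), required to invoke that proposition, is available since (A1)--(A4) are assumed. The one genuine subtlety, and the step I would treat most carefully, is that the conditional densities carry random means, so the object in the statement is a random variable: the decomposition naturally produces its expectation, and the correct reading of the stated convergence is therefore in expectation / in probability rather than deterministic or almost sure.
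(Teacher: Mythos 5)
Your proposal is correct and follows essentially the same route as the paper: the chain-rule decomposition of $\KL\big(\dens_{n}\,\|\,\adens_{n,m}\big)$ into a sum of expected conditional KL divergences, followed by non-negativity of each summand and Proposition \ref{prop:klconv} to force every term to zero. Your additional observations---that the expected conditional KL collapses to $\tfrac12\log(\sigma_{g,i}^2/\sigma_{h,i}^2)$ via the law of total variance, and that the per-realization conditional KL (a random variable through its data-dependent means) therefore converges in $L^1$ and in probability---are correct refinements that the paper's own proof leaves implicit.
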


\begin{proof}[Proof of Lemma \ref{lemm:regbound}]
The exact and Vecchia log-likelihoods can be written as
$$
\log \dens_{n} (\by ; \bftheta) 
= \sum_{i=1}^n \log \dens_{\bftheta} (y_i|\by_{h(i)}) \quad\text{and}\quad \log \adens_{n,m} (\by ; \bftheta) 
= \sum_{i=1}^n \log \adens_{\bftheta} (y_i|\by_{h(i)}),$$
where $\adens_{\bftheta} (y_i|\by_{h(i)}) = \dens_{\bftheta} (y_i|\by_{g(i)})$ for $i = 1, \ldots , n$. The KL divergence between Gaussian measures according to the densities $\dens_{n} (\by ; \bftheta)$ and $\adens_{n,m} (\by ; \bftheta)$ can be expressed as
\begin{align*}
    \KL \Big( \dens_{n} (\by ; \bftheta) \,\Big\|\, \adens_{n,m} (\by ; \bftheta) \Big) 
    &= \int \dens_{n} (\by ; \bftheta) \log\left[\frac{\dens_{n} (\by ; \bftheta)}{\adens_{n,m} (\by ; \bftheta)}\right] \text{d}\by\\
    &= \sum_{i=1}^{n}\int \dens_{n} (\by ; \bftheta) \log\left[\frac{\dens_{\bftheta}(y_i | \by_{h(i)})}{\adens_{\bftheta} (y_i | \by_{h(i)})}\right] \text{d}\by \\
    &= \sum_{i=1}^{n} \int \KL \Big( \dens_{\bftheta} (y_i | \by_{h(i)}) \,\Big\|\, \adens_{\bftheta} (y_i | \by_{h(i)}) \Big) \text{d} \dens_{\bftheta} (\by_{h(i)}) \\
    &= \sum_{i=1}^{n} \mathbb{E}_{\bftheta} \left[ \KL \Big( \dens_{\bftheta} (y_i | \by_{h(i)}) \,\Big\|\, \adens_{\bftheta} (y_i | \by_{h(i)}) \Big) \right]
\end{align*}
Since $\KL \Big( \dens_{\bftheta} (y_i | \by_{h(i)}) \,\Big\|\, \adens_{\bftheta} (y_i | \by_{h(i)}) \Big)$ is always non-negative, 
$$
\KL \Big( \dens_{n} (\by) \,\Big\|\, \adens_{n,m} (\by) ) \Big) \longrightarrow 0 \; \text{ as } \; n \longrightarrow \infty
$$
implies that
$$
\KL \Big( \dens_{\bftheta} (y_i | \by_{h(i)}) \,\Big\|\, \adens_{\bftheta} (y_i | \by_{h(i)}) \Big) \longrightarrow 0 \; \text{ as } \; n \longrightarrow \infty
$$
for every $i \in \mathbb{Z}^+$. Note that the conditional KL divergence equals to zero when both the conditional densities are exactly equal almost everywhere on the support of the first density. Since all the densities we consider are Gaussian densities, their supports are always the same.
\end{proof}

\begin{lemma}\label{lemm:unifconv}
Assuming (A1) - (A8), we have: 
\begin{equation}
    m = \knuth(\log^d n) \quad \Rightarrow \quad \sup_{\bftheta \in \bfTheta} | \log \dens_{n} (\cdot ; \bftheta) - \log \hat{\dens}_{n,m} (\cdot ; \bftheta) | \overset{p}{\to} 0 \, \text{ under } P_{\bftheta_0}
\end{equation}
where $P_{\bftheta_0}$ denote the Gaussian measure corresponding to the GP with the data-generating parameter $\bftheta_0 \in \bfTheta$.
\end{lemma}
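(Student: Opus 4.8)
The plan is to reduce the uniform statement to two ingredients: a pointwise-in-$\bftheta$ convergence of the log-likelihood difference under the data-generating measure $P_{\bftheta_0}$, and a stochastic-equicontinuity argument that upgrades pointwise to uniform convergence using compactness of $\bfTheta$ (A5) together with the gradient bound (A7). First I would fix $\bftheta\in\bfTheta$ and write the difference in explicit Gaussian form,
\begin{equation}
D_n(\bftheta) := \log\dens_n(\by;\bftheta) - \log\adens_{n,m}(\by;\bftheta) = \tfrac12\log\frac{\det\hat{\bK}_{n,m}(\bftheta)}{\det\bK_n(\bftheta)} - \tfrac12\by^\top E_n(\bftheta)\by,
\end{equation}
where $E_n(\bftheta) := \bK_n(\bftheta)^{-1} - \hat{\bK}_{n,m}(\bftheta)^{-1}$ is the precision error. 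Since $D_n(\bftheta)$ is a quadratic form in $\by$, under $P_{\bftheta_0}$ (where $\by\sim\normal_n(\bfzero,\bK_n(\bftheta_0))$) its mean and variance are traces,
\begin{align}
\mathbb{E}_{\bftheta_0}[D_n(\bftheta)] &= \KL\big(\dens_n(\cdot;\bftheta)\,\|\,\adens_{n,m}(\cdot;\bftheta)\big) - \tfrac12\tr\!\big(E_n(\bftheta)(\bK_n(\bftheta_0)-\bK_n(\bftheta))\big), \\
\var_{\bftheta_0}[D_n(\bftheta)] &= \tfrac12\tr\!\big((E_n(\bftheta)\bK_n(\bftheta_0))^2\big),
\end{align}
and I would show both traces vanish.

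To do so I would factor $E_n(\bftheta) = -(B_n(\bftheta)-\bI)\bK_n(\bftheta)^{-1}$ with $B_n := \hat{\bK}_{n,m}^{-1}\bK_n$. Writing the KL divergence as $\tfrac12\sum_j g(\lambda_j)$ with $g(\lambda)=\lambda-1-\log\lambda\ge0$ and $\lambda_j$ the eigenvalues of $B_n(\bftheta)$, Proposition \ref{prop:klconv} forces every $\lambda_j$ into a shrinking neighborhood of $1$ for $n$ large, whence $\|B_n(\bftheta)-\bI\|_F^2\le C\,\KL$. Under (A2)--(A3) the covariance matrices are uniformly elliptic with condition numbers growing at most polynomially in $n$, so the cross operator norms $\|\bK_n(\bftheta)^{-1}\bK_n(\bftheta_0)\|_{\mathrm{op}}$ and $\|\bK_n(\bftheta)^{-1}(\bK_n(\bftheta_0)-\bK_n(\bftheta))\|_{\mathrm{op}}$ grow only polynomially. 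Combined with the \emph{super-polynomial} decay of the KL divergence from Proposition \ref{prop:klconv}, both traces are bounded by (a polynomial in $n$)$\,\times\,\KL\to0$. Hence $\mathbb{E}_{\bftheta_0}[D_n(\bftheta)^2]\to0$, so $D_n(\bftheta)\overset{p}{\to}0$ under $P_{\bftheta_0}$ for each fixed $\bftheta$.

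For the uniform upgrade I would use a covering argument. By (A6) each $D_n(\cdot)$ is $C^1$ on the interior of $\bfTheta$, and by (A7) the bound $\mathbb{E}_{\bftheta_0}[\sup_{\bftheta}\|\partial_{\bftheta}D_n(\bftheta)\|]<c_\nabla$ holds uniformly in $n$. Covering the compact set $\bfTheta$ by finitely many balls $B(\bftheta_k,\eta)$ and applying the mean-value inequality gives $\sup_{\bftheta}|D_n(\bftheta)|\le\max_k|D_n(\bftheta_k)|+\eta\sup_{\bftheta}\|\partial_{\bftheta}D_n(\bftheta)\|$. The first term converges to $0$ in probability as a finite maximum of pointwise-convergent quantities; the second is controlled uniformly in $n$ via Markov's inequality and $c_\nabla$, so $\limsup_n P_{\bftheta_0}(\sup_{\bftheta}|D_n(\bftheta)|>\epsilon)\le 2\eta c_\nabla/\epsilon$. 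Letting $\eta\to0$ yields the claimed uniform convergence.

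The hard part is the pointwise step under the \emph{data-generating} measure $P_{\bftheta_0}$ rather than $P_{\bftheta}$. Under $P_{\bftheta}$ one obtains $D_n(\bftheta)\to0$ almost for free from Proposition \ref{prop:klconv} (via Pinsker, since $\mathbb{E}_{\bftheta}[D_n(\bftheta)]=\KL$), but this cannot simply be transported to $P_{\bftheta_0}$: for identifiable parameters the Gaussian measures $P_{\bftheta}$ and $P_{\bftheta_0}$ become mutually singular in the limit, so the finite-$n$ Radon--Nikodym derivative degenerates and a change-of-measure bound fails. The resolution is exactly the trace estimate above, whose success rests on the two facts that (i) the KL divergence decays faster than any fixed-order polynomial in $n$ and (ii) assumptions (A2)--(A3) bound the relevant covariance condition numbers polynomially, so the super-polynomial decay dominates. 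Carefully verifying (ii) within the operator framework underlying Proposition \ref{prop:klconv} is the main technical burden.
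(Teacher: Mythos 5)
Your proposal follows the same two-step skeleton as the paper's proof --- pointwise convergence of $l(\bftheta)=\log\dens_n(\cdot;\bftheta)-\log\adens_{n,m}(\cdot;\bftheta)$ for each fixed $\bftheta$, then an upgrade to uniformity over the compact $\bfTheta$ via the gradient bound (A7) and the mean value theorem --- but the pointwise step is genuinely different, and in fact more careful than the paper's. The paper computes the mean and variance of $l(\bftheta)$ \emph{under $P_{\bftheta}$} (obtaining $\mathbb{E}_{\bftheta}[l(\bftheta)]=\KL$ and $\mathrm{var}_{\bftheta}[l(\bftheta)]=\tfrac12\sum_i(1-\lambda_{n,i})^2$ with $\lambda_{n,i}$ the eigenvalues of $\hat{\bK}_{n,m}^{-1}\bK_n$, the latter controlled via Pinsker and Devroye et al.), and then applies Markov; strictly speaking this yields convergence under $P_{\bftheta}$, not under the data-generating measure $P_{\bftheta_0}$ as the lemma asserts. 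You identify exactly this measure mismatch and close it with the extra trace estimates $\trace\bigl(E_n(\bftheta)(\bK_n(\bftheta_0)-\bK_n(\bftheta))\bigr)$ and $\trace\bigl((E_n(\bftheta)\bK_n(\bftheta_0))^2\bigr)$, beating polynomially growing spectral factors with the super-polynomial KL decay from Proposition \ref{prop:klconv}. That is the right fix, and the needed spectral control is indeed available: under (A3) both $\bK_n(\bftheta)$ and $\bK_n(\bftheta_0)$ are spectrally equivalent to the same discretized Sobolev operator, so $\|\bK_n(\bftheta)^{-1}\bK_n(\bftheta_0)\|_{\mathrm{op}}$ is in fact bounded and the Frobenius norms contribute only a factor $\sqrt{n}$. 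Two small cautions: (i) since $B_n=\hat{\bK}_{n,m}^{-1}\bK_n$ is not symmetric, $\sum_j(\lambda_j-1)^2$ equals $\trace\bigl((B_n-\bI)^2\bigr)$ rather than $\|B_n-\bI\|_F^2$; you should pass to the symmetrization $\hat{\bK}_{n,m}^{-1/2}\bK_n\hat{\bK}_{n,m}^{-1/2}$ (picking up a bounded conjugation factor) before invoking the Frobenius--operator norm submultiplicativity; (ii) your covering argument for the uniform step is a finite-$\varepsilon$ version of the paper's equicontinuity-in-probability argument and buys nothing extra, so either formulation is fine. Net: your route costs the verification of the cross-condition-number bound, but buys a proof of the lemma as actually stated (under $P_{\bftheta_0}$), which the paper's own argument only sketches.
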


\begin{proof}[Proof of Lemma~\ref{lemm:unifconv}] 
Let $l(\bftheta) = \log \dens_{n} (\cdot ; \bftheta) - \log \hat{\dens}_{n,m} (\cdot ; \bftheta)$. If suffices to show that $l(\bftheta)$ converges to 0 in probability for every $\bftheta \in \bfTheta$ and is equicontinuous in probability. First, from (A1) - (A4), we know that the KL divergence between $\dens_{n} (\by ; \bftheta)$ and $\hat{\dens}_{n,m} (\by ; \bftheta)$ converges to zero as $m = \knuth(\log^d n)$ and $n \rightarrow \infty$. Then,
\begin{equation}
    \mathbb{E}_{\bftheta} \Big[ \dens_{n} (\by ; \bftheta) - \hat{\dens}_{n,m} (\by ; \bftheta) \Big] = \KL \Big( \dens_{n} (\cdot ; \bftheta) \| \hat{\dens}_{n,m} (\cdot ; \bftheta) \Big) \rightarrow 0
\end{equation}
as $m = \knuth(\log^d n)$ and $n \rightarrow \infty$. Also,
\begin{align}
    \text{var}_{\bftheta} \Big[ \dens_{n} (\by ; \bftheta) - \hat{\dens}_{n,m} (\by ; \bftheta) \Big] 
    &= \frac{1}{4} \text{var}_{\bftheta} \Big[ \by^{\top} (\bK_n^{-1} - \hat{\bK}_{n,m}^{-1}) \by \Big]\\
    &= \frac{1}{2} \trace \Big[ (\bI_n - \hat{\bK}_{n,m}^{-1} \bK_n)^2 \Big]
    = \frac{1}{2} \sum_{i=1}^{n} (1 - \lambda_{n,i})^2.
\end{align}
where $\bI_n$ is the $n$ by $n$ identity matrix and $\lambda_{n,1} , \ldots , \lambda_{n,n}$ are eigenvalues of $\hat{\bK}_{n,m}^{-1} \bK_n$. Applying Pinsker's inequality, one can see that total variation between Gaussian measures induced by $\dens_{n} (\by ; \bftheta)$ and $\hat{\dens}_{n,m} (\by ; \bftheta)$ also tends to zero. Also, from Theorem 1.1 of \cite{Devroye2018}, $\sum_{i=1}^{n} (1-\lambda_{n,i})^2 \rightarrow 0$ as $m = \knuth(\log^d n)$ and $n \rightarrow \infty$. By applying Markov's inequality, the pointwise convergence can be achieved.

To show that $l(\bftheta)$ is equicontinuous in probability, we use the mean value theorem as follows: Since
\begin{align}
    | l(\bftheta_1) - l(\bftheta_2)| \le \sup_{\bftheta \in \bfTheta} \| \nabla l(\bftheta) \| \| \bftheta_1 - \bftheta_2 \|
\end{align}
for any $\bftheta_1 , \bftheta_2 \in \bfTheta$, it suffices to show that $\sup_{\bftheta \in \bfTheta} \| \nabla l(\bftheta) \|$ is bounded in probability. From (A8), one can see that
\begin{align}
    \mathbb{E}_{\bftheta} \Big[ \sup_{\bftheta \in \bfTheta} \| \nabla l(\bftheta) \| \Big] 
    &\le \mathbb{E}_{\bftheta_0} \Big[ \sup_{\bftheta \in \bfTheta} \| \frac{\partial}{\partial \bftheta} \log \dens_n(\by ; \bftheta) \| + \sup_{\bftheta \in \bfTheta} \| \frac{\partial}{\partial \bftheta} \hat{\dens}_{n,m}(\by ; \bftheta) \| \Big] \\
    &< c_\nabla < \infty.
\end{align}
Then, $\sup_{\bftheta \in \bfTheta} \| \nabla l(\bftheta) \|$ is bounded in probability, which implies that $l(\bftheta)$ is equicontinuous in probability.
\end{proof}

\begin{proof}[Proof of Proposition \ref{prop:mvlebound}]
    From Lemma \ref{lemm:unifconv}, there exist ($P_{\bftheta_0}$-almost surely) a sequence $m = \knuth(\log^d n)$ such that $$ \hat{\bftheta}_{n,m} - \hat{\bftheta}_n \rightarrow 0\, \text{ in probability under } P_{\bftheta_0}$$ as $n \rightarrow \infty$. Then, from the consistency of $\hat{\bftheta}_n$, one can easily see that $\hat{\bftheta}_{n,m} \rightarrow \bftheta_0 \, \text{ in probability under } P_{\bftheta_0}.$ Also, by applying Slutsky's theorem, it can be shown that the asymptotic normality of $\hat{\bftheta}_n$ implies that of $\hat{\bftheta}_{n,m}$. Additionally, if a sequence $m'$ increases faster than $m$, the same results hold. 
\end{proof}

\begin{proof}[Proof of Proposition \ref{prop:unbias}]
The Vecchia log-likelihood can be written as
\begin{align}
    \log \adens_{n,m} (\by , \bftheta) 
    &= \sum_{i=1}^{n} \log p_{\bftheta} (y_i | \by_{g(i)}) \\
    &= -\frac{n}{2} \log(2\pi) - \frac{1}{2} \sum_{i=1}^{n} \Big[ \log \sigma^2_{i | g(i)} + {u^2_{i|g(i)}}/{\sigma^2_{i | g(i)}} \Big]
\end{align}
where $\sigma^2_{i | g(i)} = \bK_{i,i} - \bK_{i,g(i)} \bK_{g(i),g(i)}^{-1} \bK_{g(i),i}$ and $u_{i|g(i)} = y_i - \bK_{i,g(i)} \bK_{g(i),g(i)}^{-1} \by_{g(i)}$. Note that $u_{1|g(1)} = y_1$. Also $u_{i|g(i)} = y_i - \mathbb{E}_{\theta} [y_i | \by_{g(i)}]$ for $i = 2, \ldots , n$. Then, it can be easily shown that, for any $i = 1, \ldots , n$, $\mathbb{E}_{\bftheta} [u_{i|g(i)}] = 0$, $\mathbb{E}_{\bftheta} [u^2_{i|g(i)}] = \sigma^2_{i | g(i)}$, and
\begin{align}
    \text{cov}_{\bftheta} \Big[ u_{i|g(i)} , \ba^{\top} \by_{g(i)} \Big] 
    &= \mathbb{E}_{\bftheta} \Big[ u_{i|g(i)} \by_{g(i)}^{\top}\ba \Big] \\
    &= \mathbb{E}_{\bftheta} \Big[ y_i \by_{g(i)}^{\top} \Big] \ba - \bK_{i,g(i)} \bK_{g(i),g(i)}^{-1} \mathbb{E}_{\bftheta} \Big[ \by_{g(i)} \by_{g(i)}^{\top} \Big] \ba \\
    &= \bK_{i,g(i)} \ba - \bK_{i,g(i)} \bK_{g(i),g(i)}^{-1} \bK_{g(i),g(i)} \ba \\
    &= \bK_{i,g(i)} \ba - \bK_{i,g(i)} \ba = 0
\end{align}
for any column vector $\ba$ which has compatible size. This implies that $u_{i|g(i)}$ is independent of any contrast of elements of $\by_{g(i)}$.

For any $k = 1, \ldots , q$, the partial derivative of $\log \adens_{n,m} (\by , \bftheta)$ with respect to the $k$th entry of $\bftheta$ can be expressed as 
\begin{equation}
    \frac{\partial \log \adens_{n,m} (\by , \bftheta) }{\partial \theta_k} = -\frac{1}{2} \sum_{i=1}^{n} \Big[ \frac{1}{\sigma^2_{i|g(i)}} \frac{\partial \sigma^2_{i|g(i)}}{\partial \theta_k} + 2 \frac{u_{i|g(i)}}{\sigma^2_{i|g(i)}} \frac{\partial u_{i|g(i)}}{\partial \theta_k} - \frac{u^2_{i|g(i)}}{\sigma^4_{i|g(i)}} \frac{\partial \sigma^2_{i|g(i)}}{\partial \theta_k} \Big].
\end{equation}
Since $\frac{\partial u_{i|g(i)}}{\partial \theta_k}$ is a contrast of elements of $\by_{g(i)}$, it is independent of $u_{i|g(i)}$. Then,
\begin{align}
    \mathbb{E}_{\theta} \Big[ \frac{\partial \log \adens_{n,m} (\by , \bftheta) }{\partial \theta_k} \Big]
    &= -\frac{1}{2} \sum_{i=1}^{n} \Big\{ \frac{1}{\sigma^2_{i|g(i)}} \frac{\partial \sigma^2_{i|g(i)}}{\partial \theta_k} - \frac{1}{\sigma^4_{i|g(i)}} \frac{\partial \sigma^2_{i|g(i)}}{\partial \theta_k} \Big\}\\
    &= 0
\end{align}
Therefore, the Vecchia-score function is unbiased for any ordering and conditioning.
\end{proof}


\section{Additional numerical results \label{app:numeric}}

This section contains additional numerical results which are not included in the main manuscript. Complementing Figure \ref{fig:comparison_mle}, Figure \ref{fig:comparison_mle_boxplot} shows box plots of estimates and $90\%$ intervals of maximum approximate-likelihood estimators based on taper, reduced rank, and Vecchia approximations. Also, as a baseline, $90\%$ intervals of maximum likelihood estimators based on exact GP inference are plotted.

\begin{figure}
    \centering
    \includegraphics[width = 0.33\linewidth]{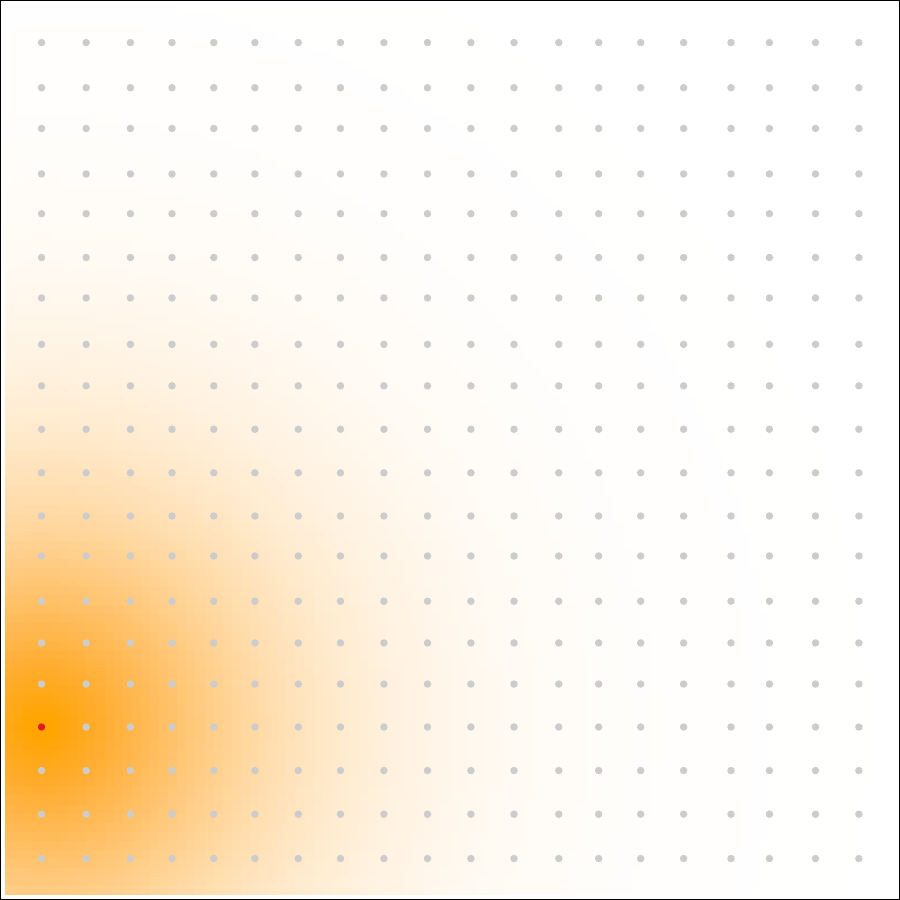}%
    \includegraphics[width = 0.33\linewidth]{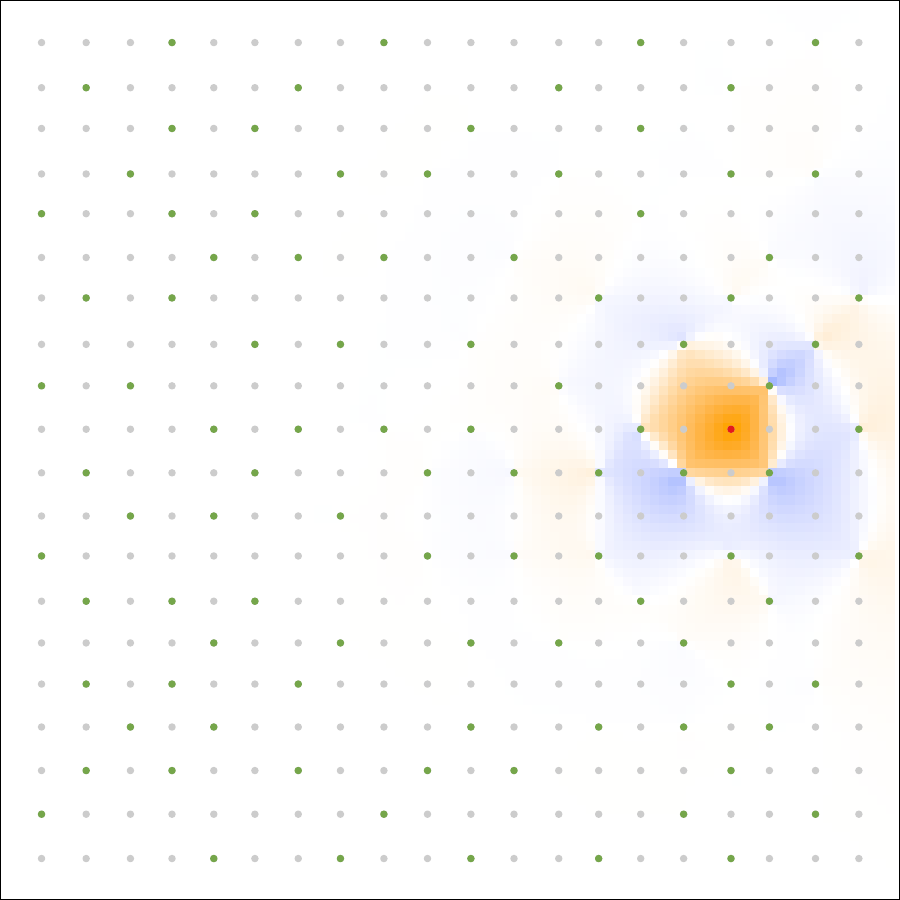}%
    \includegraphics[width = 0.33\linewidth]{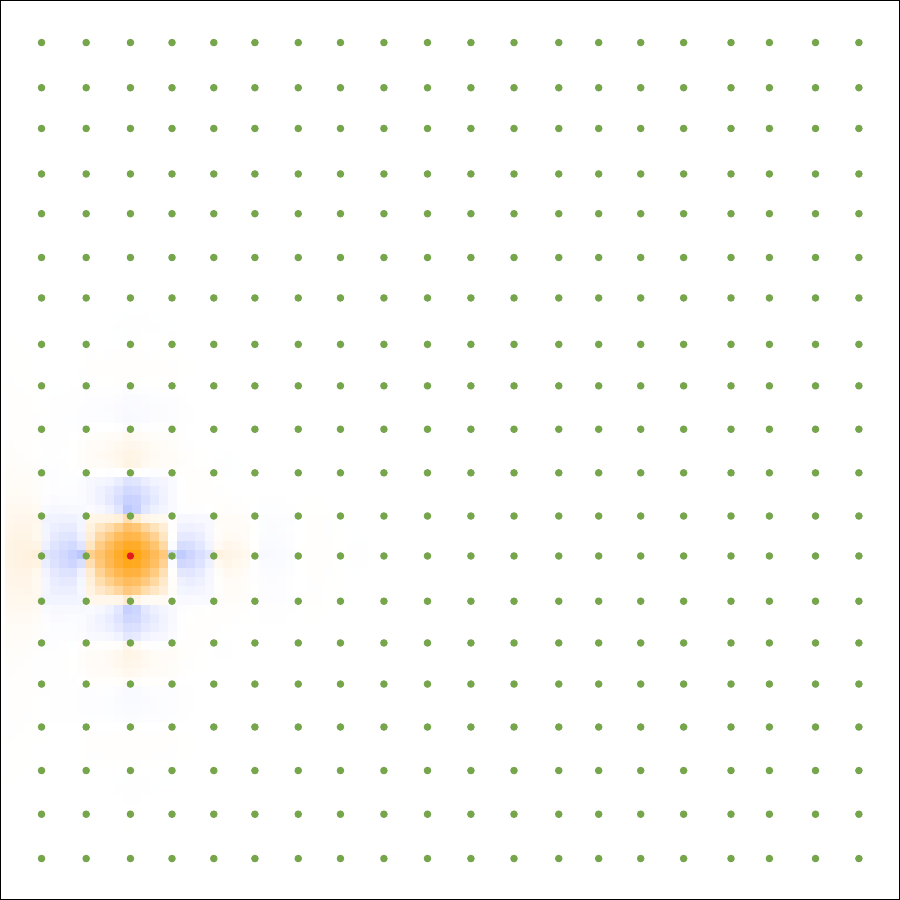}%
    \vspace{0.5em}
    \includegraphics[width = 0.33\linewidth]{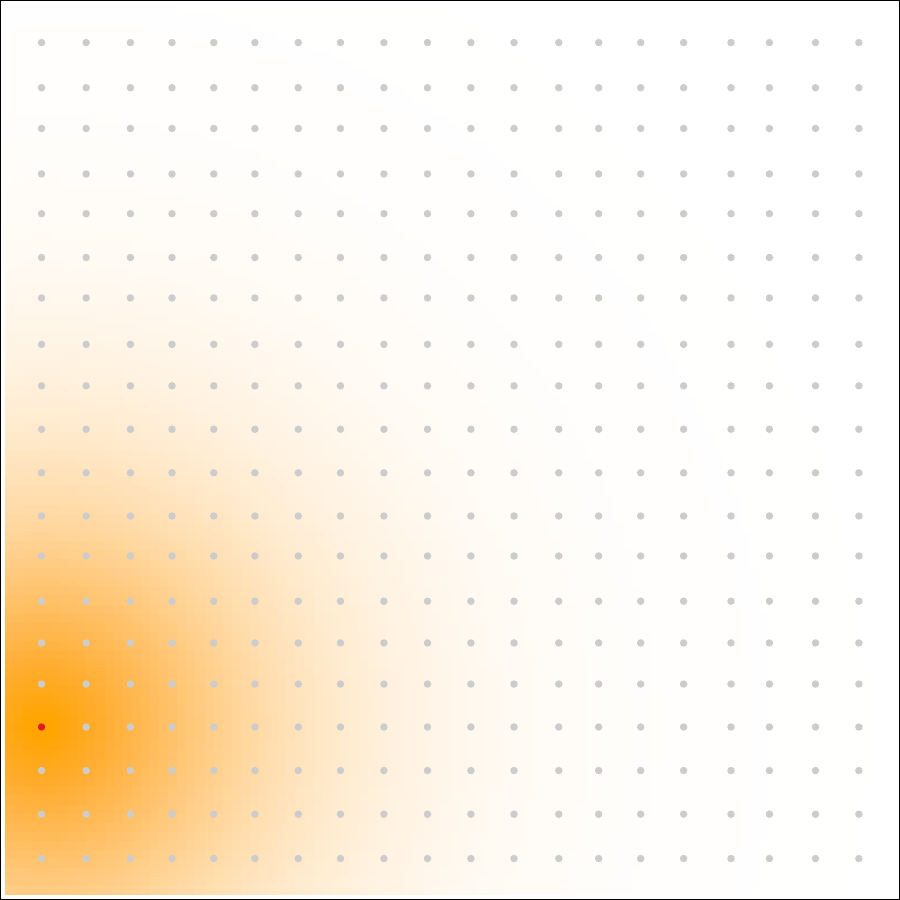}%
    \includegraphics[width = 0.33\linewidth]{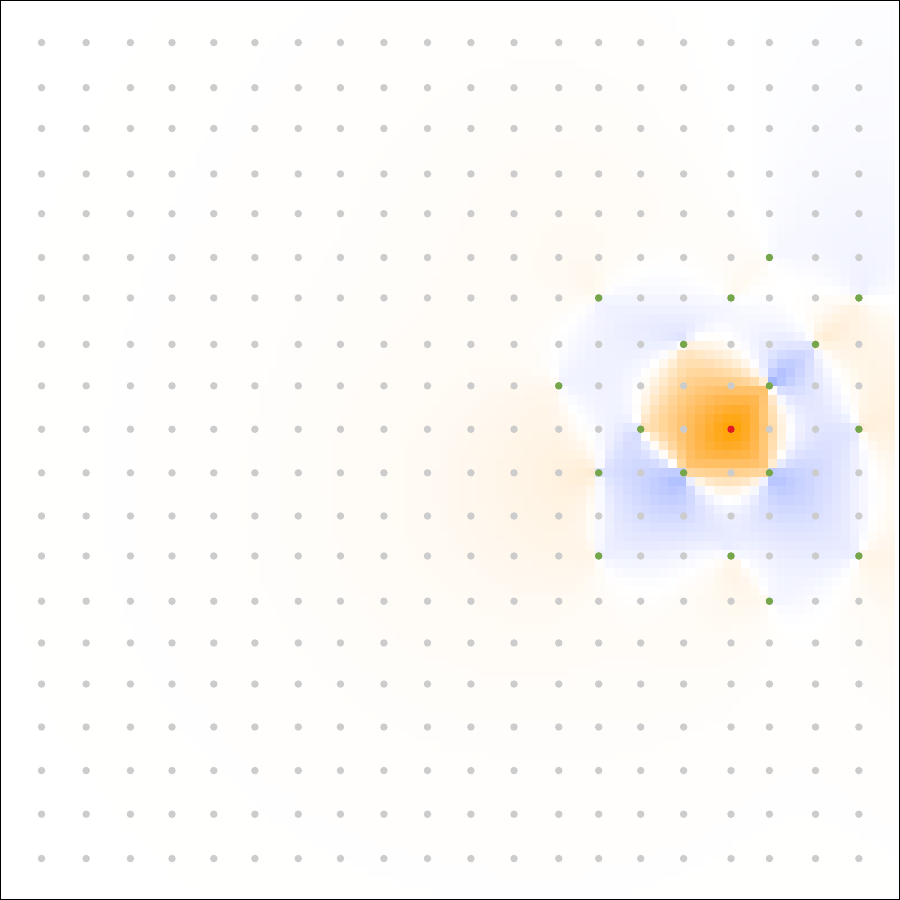}%
    \includegraphics[width = 0.33\linewidth]{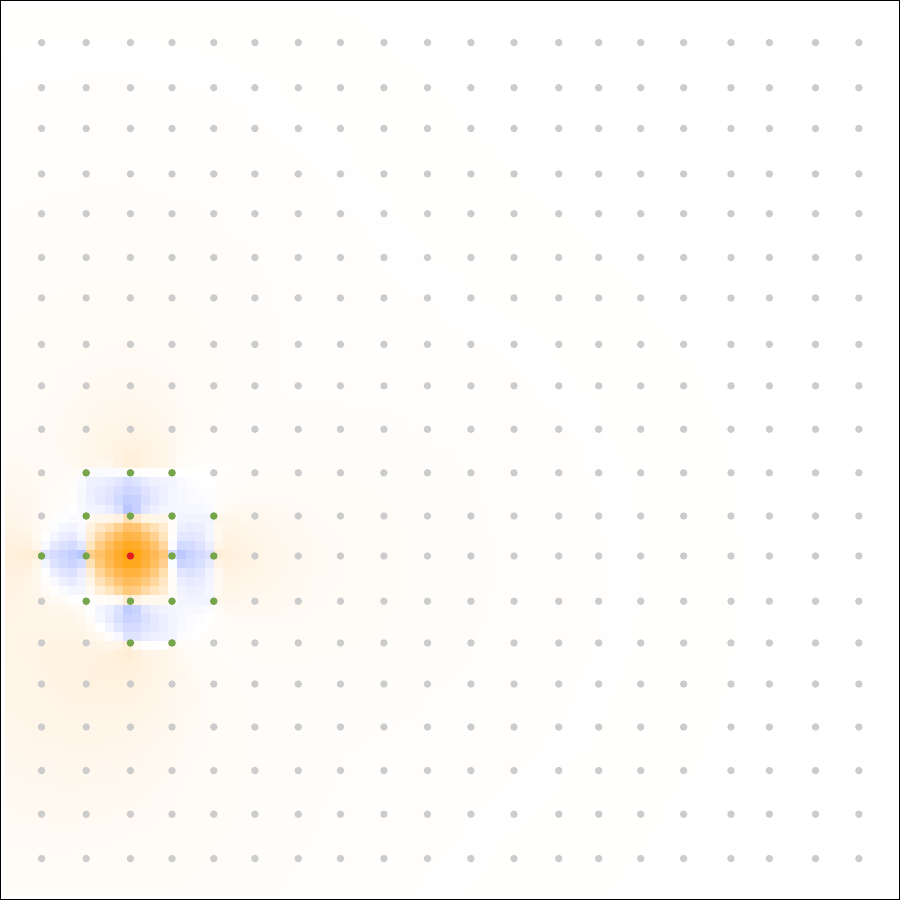}%
    \caption{For i = 1, 100, 400 (from left to right), positive (orange) and negative (blue) conditional correlations with the $i$th input $\bx_i$ in the maximin ordering (red points) conditional on all (top) and $m=17$ nearest (bottom) previously ordered inputs (green points), for a GP with Mat\'ern covariance (range $r = 0.1$ and smoothness $\nu = 2$) on a grid of size $n = 20 \times 20 = 400$. This figure is inspired by Figure 5 in \citet{Schafer2020}.}
    \label{fig:corcond_n400}
\end{figure}

\begin{figure}
    \centering
    \includegraphics[width = 0.14\linewidth]{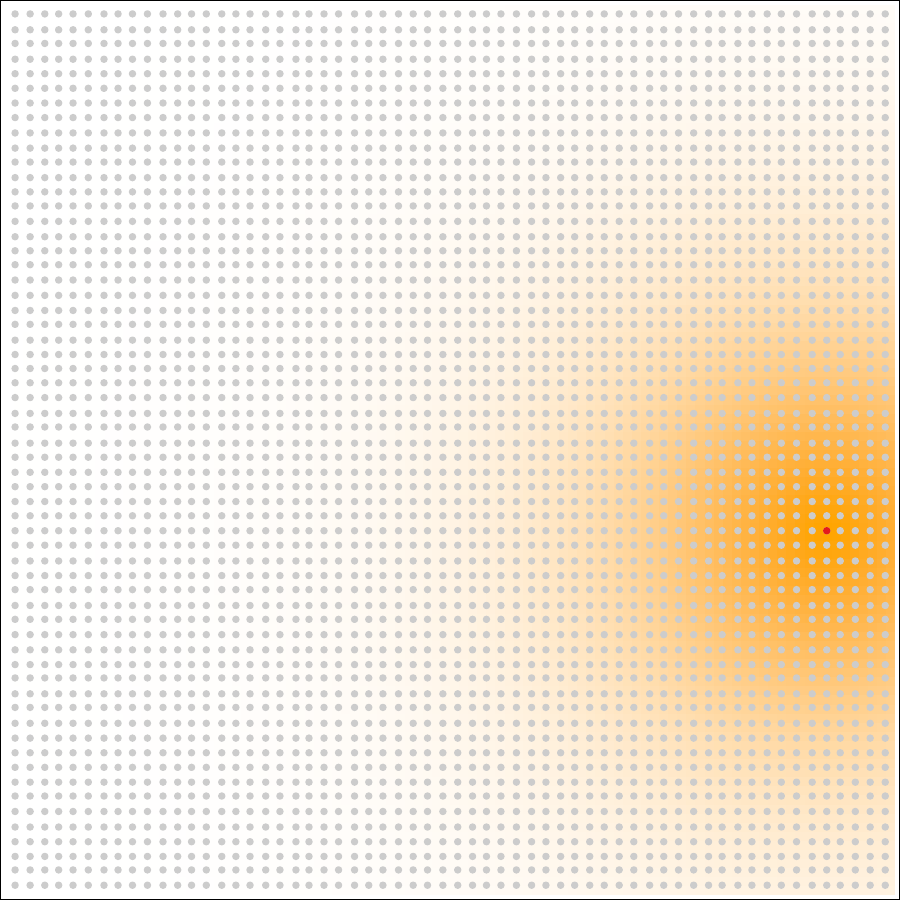}%
    \includegraphics[width = 0.14\linewidth]{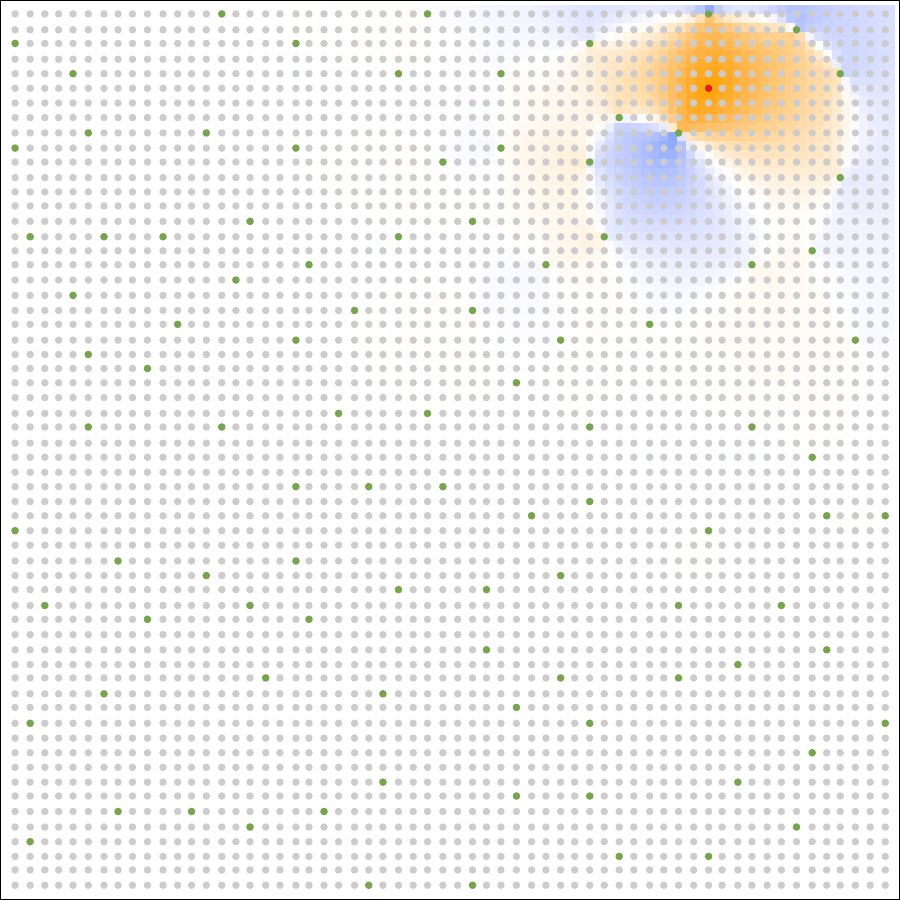}%
    \includegraphics[width = 0.14\linewidth]{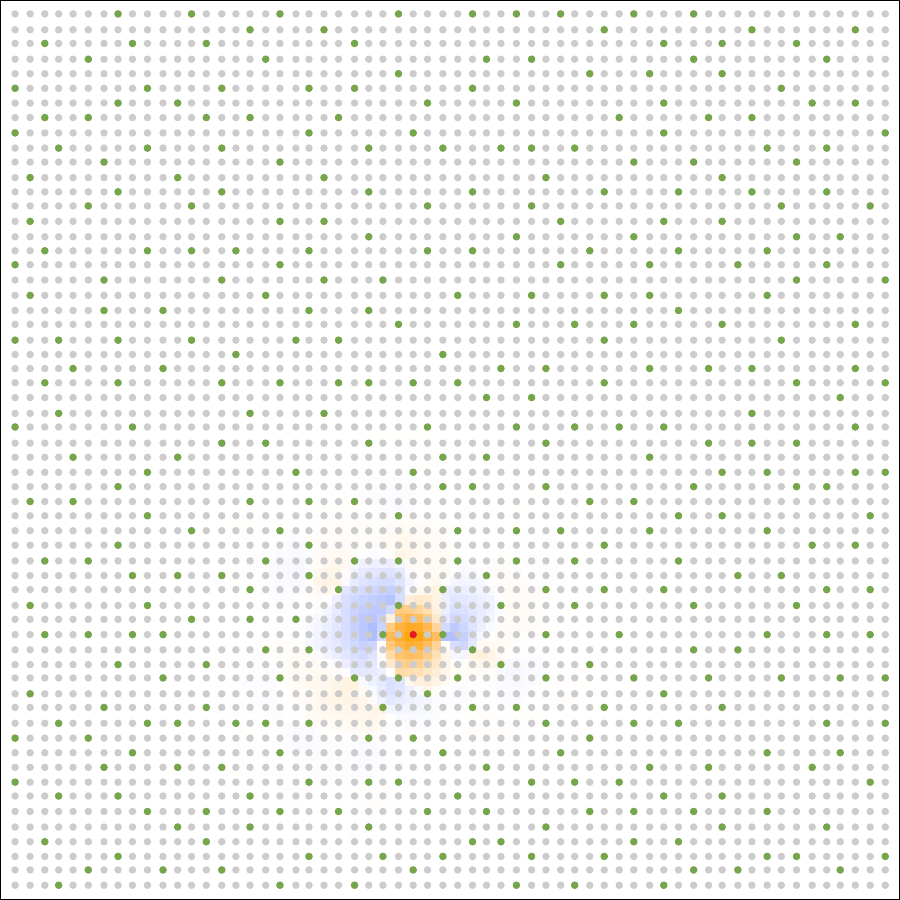}%
    \includegraphics[width = 0.14\linewidth]{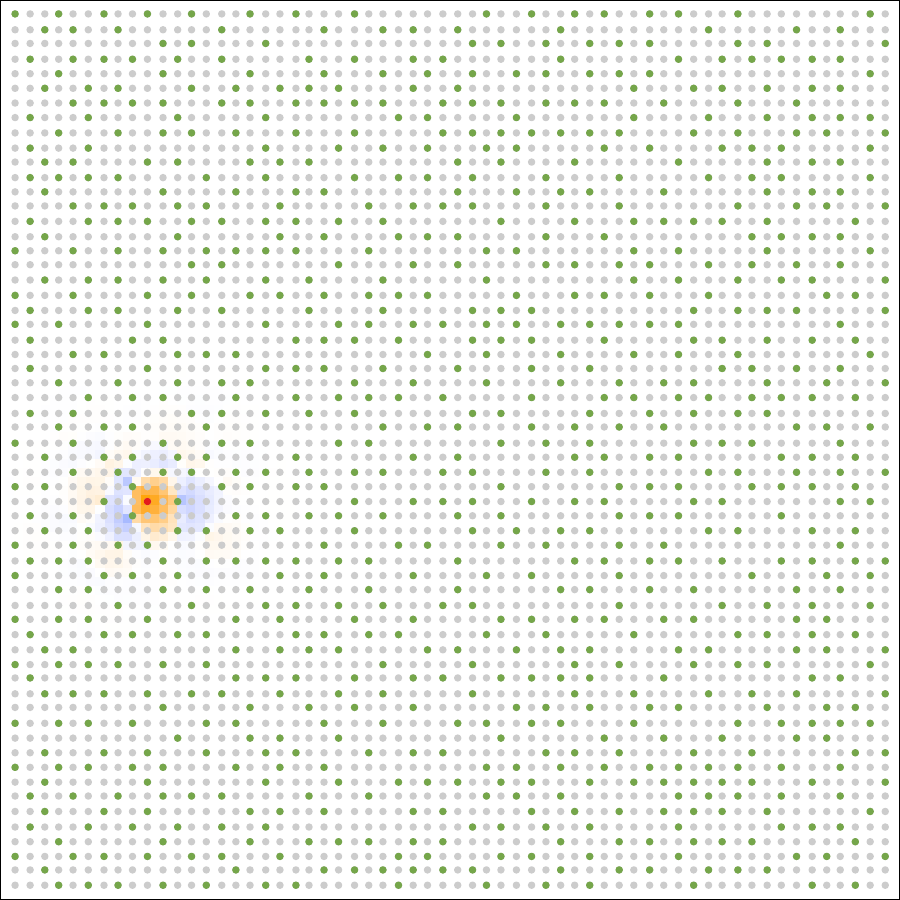}%
    \includegraphics[width = 0.14\linewidth]{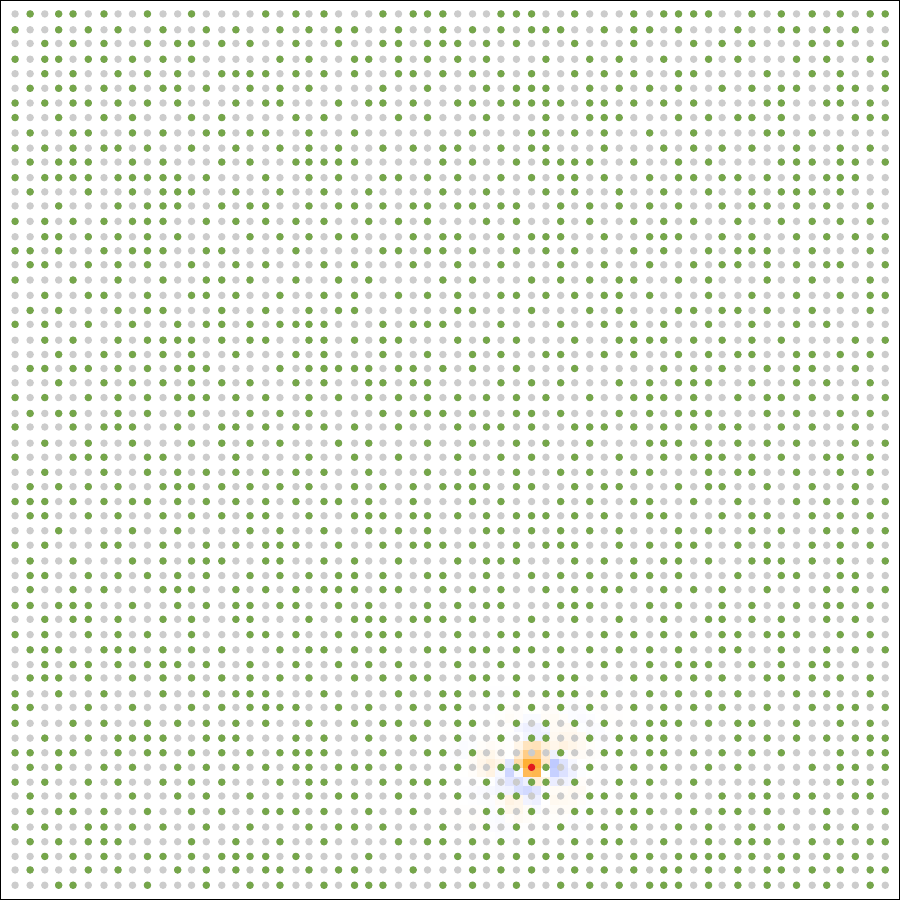}%
    \includegraphics[width = 0.14\linewidth]{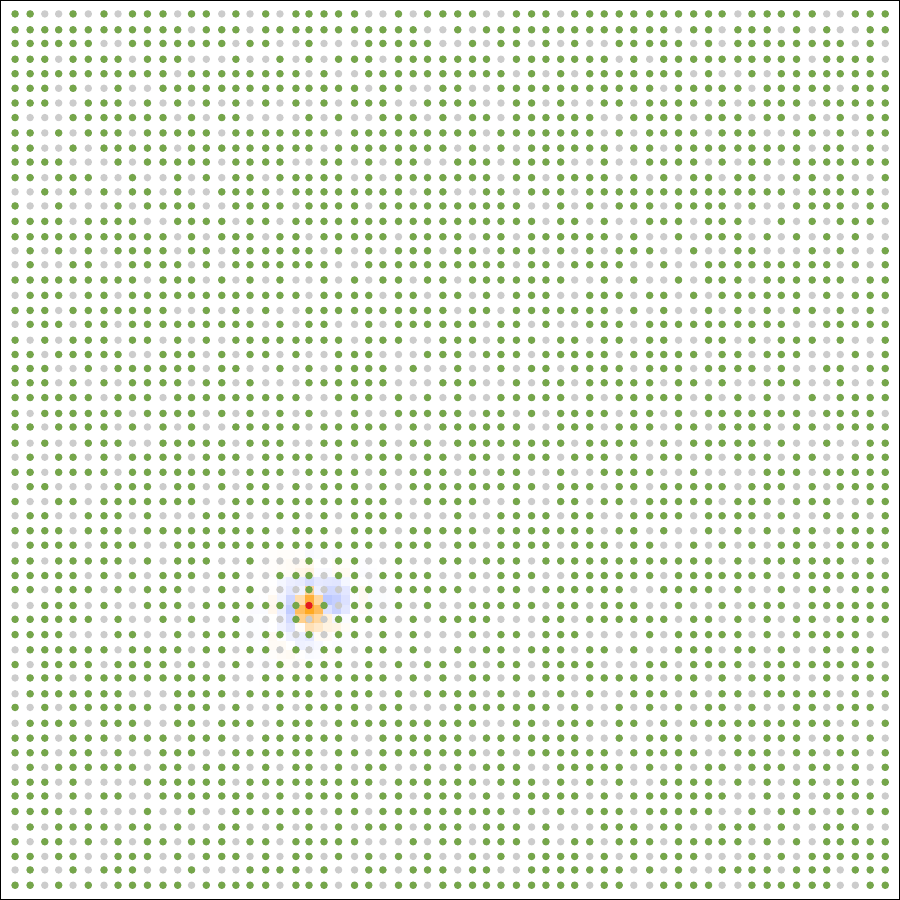}%
    \includegraphics[width = 0.14\linewidth]{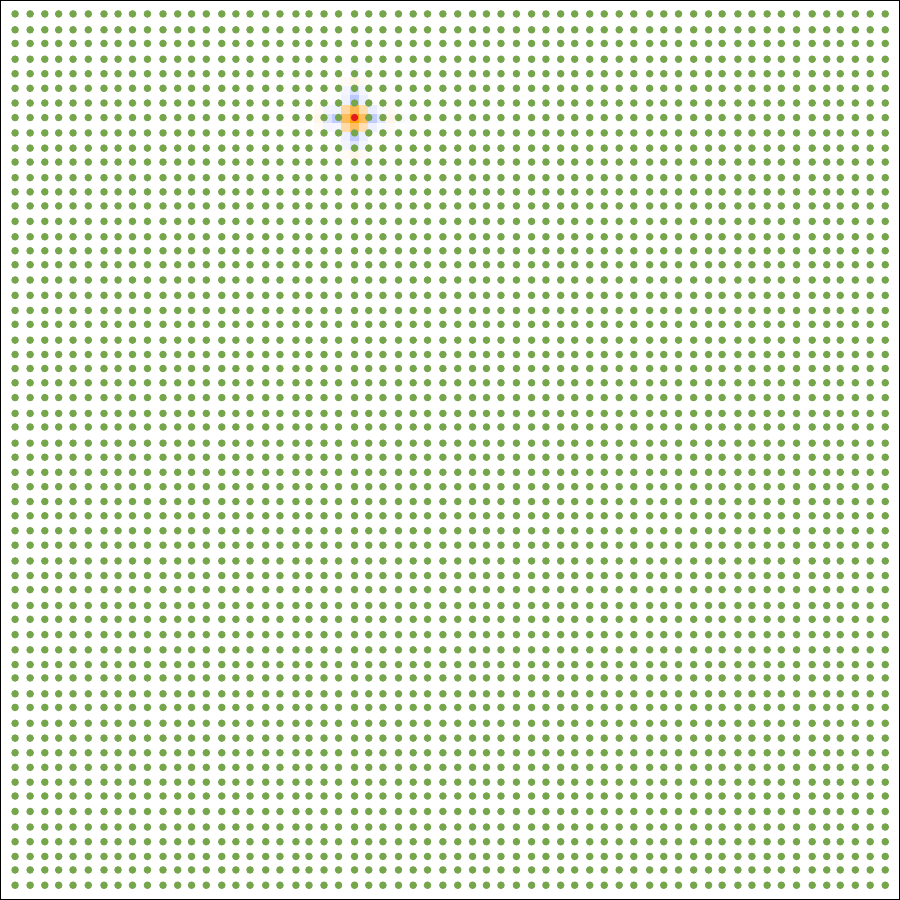}%
    \vspace{0.5em}
    \includegraphics[width = 0.14\linewidth]{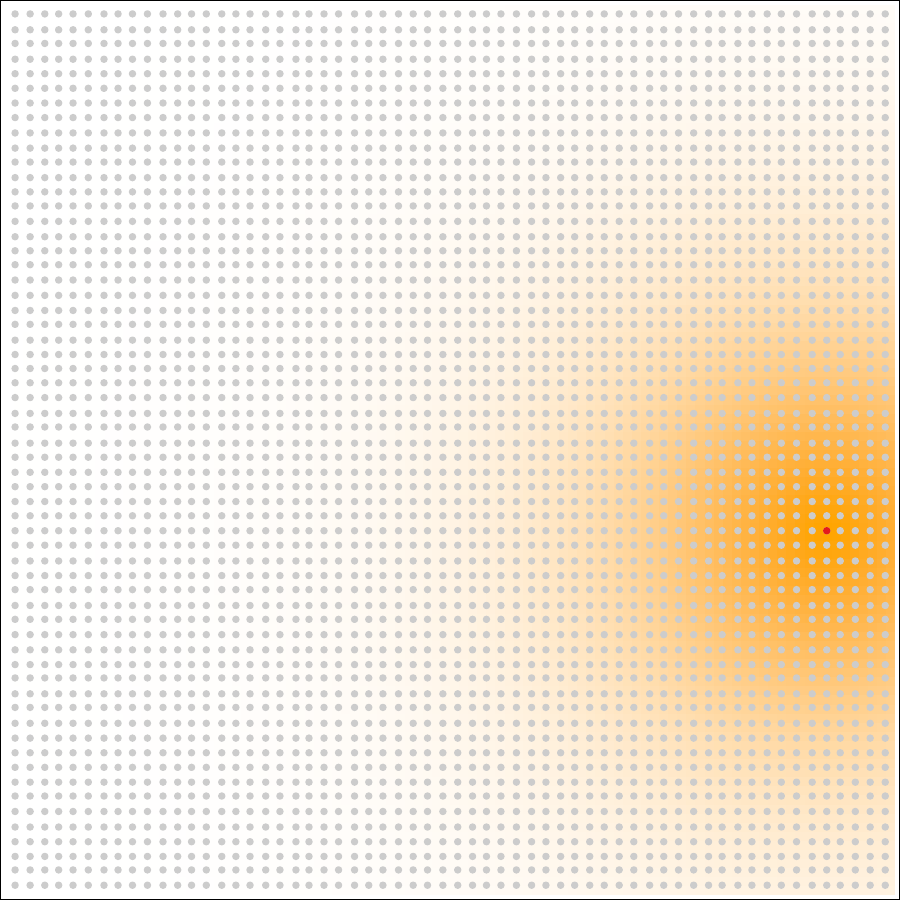}%
    \includegraphics[width = 0.14\linewidth]{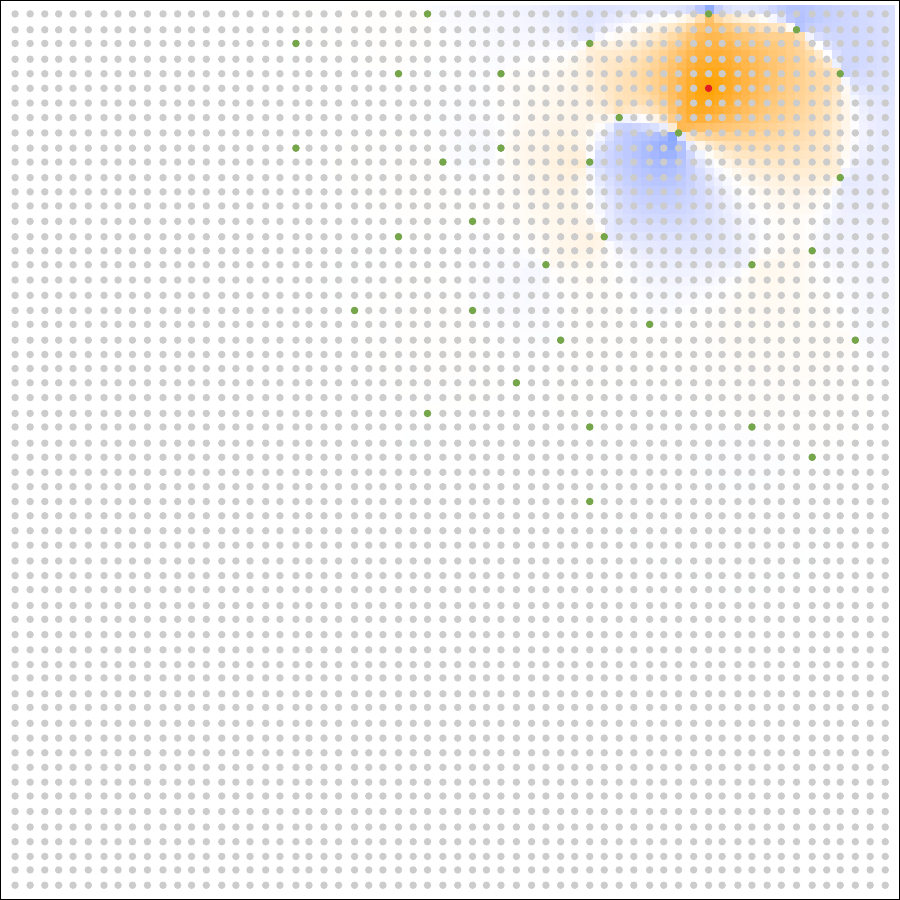}%
    \includegraphics[width = 0.14\linewidth]{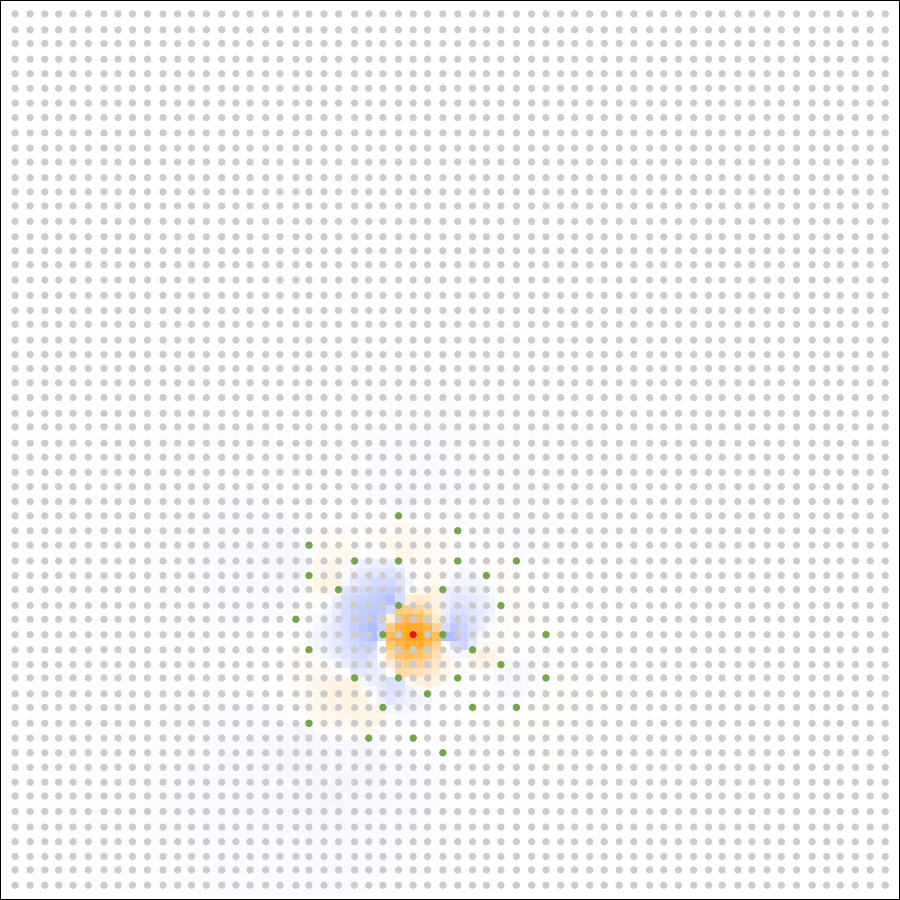}%
    \includegraphics[width = 0.14\linewidth]{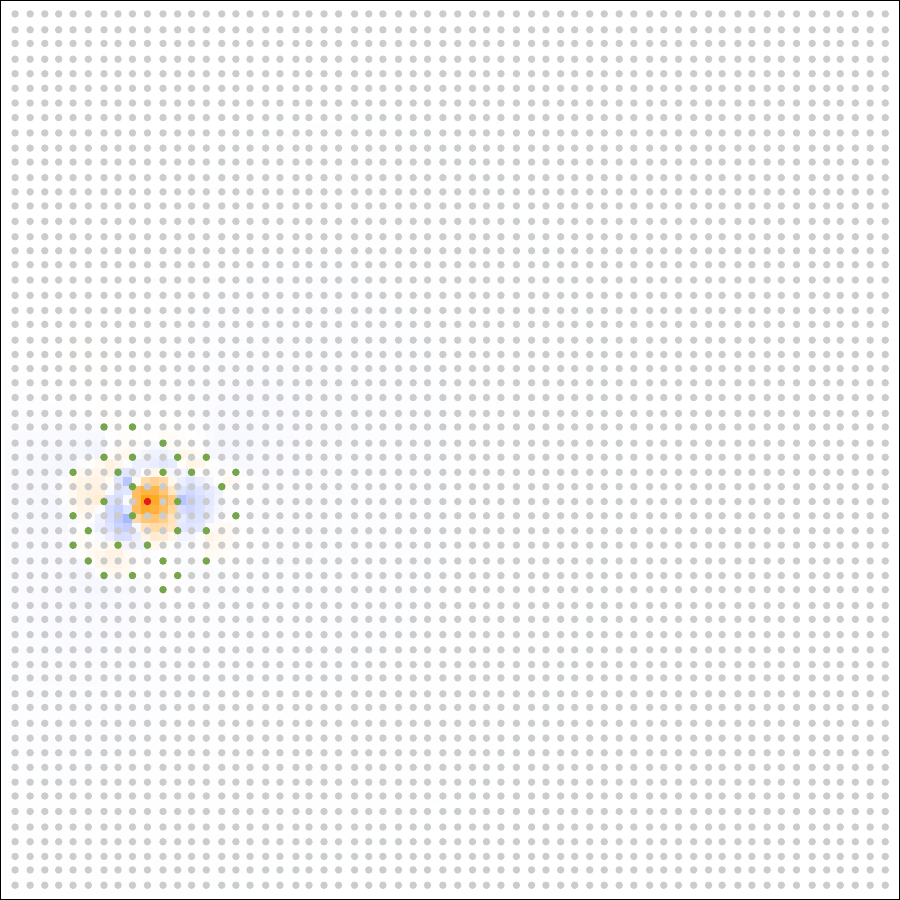}%
    \includegraphics[width = 0.14\linewidth]{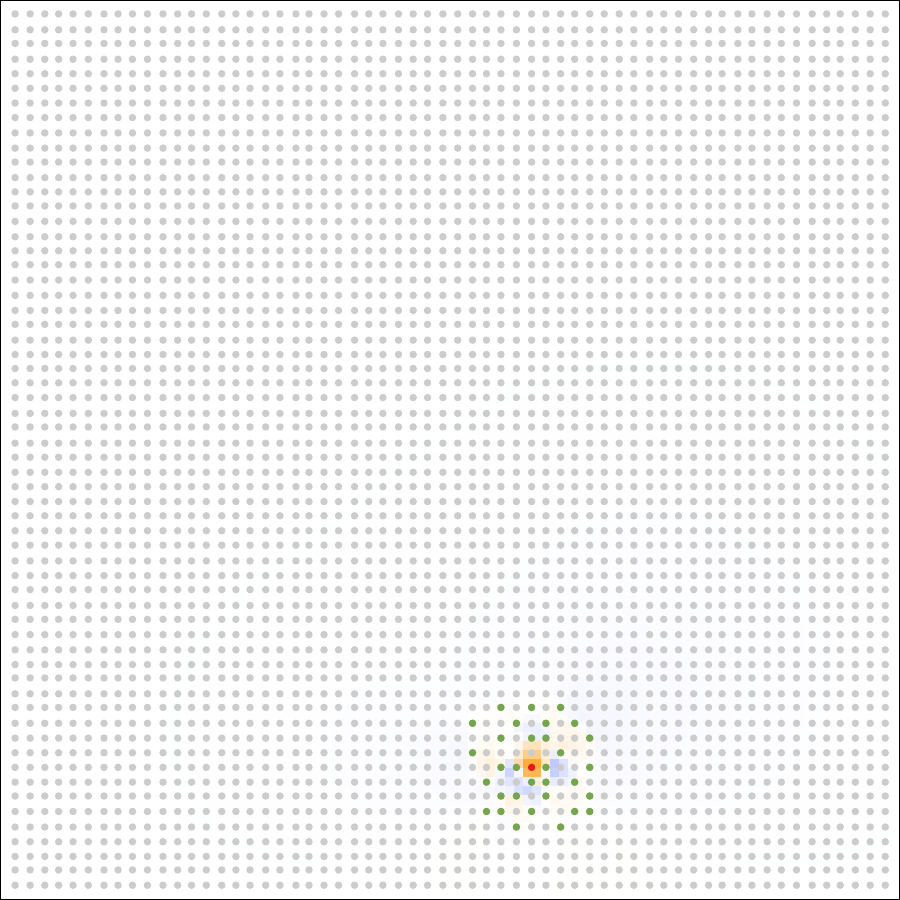}%
    \includegraphics[width = 0.14\linewidth]{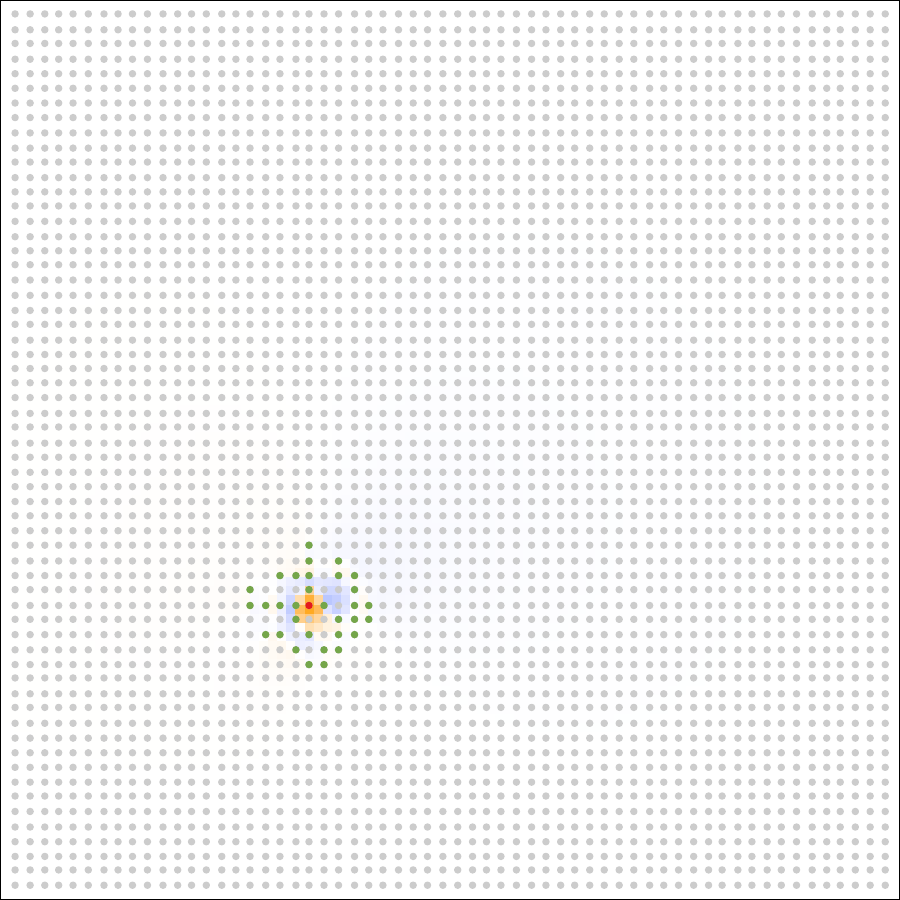}%
    \includegraphics[width = 0.14\linewidth]{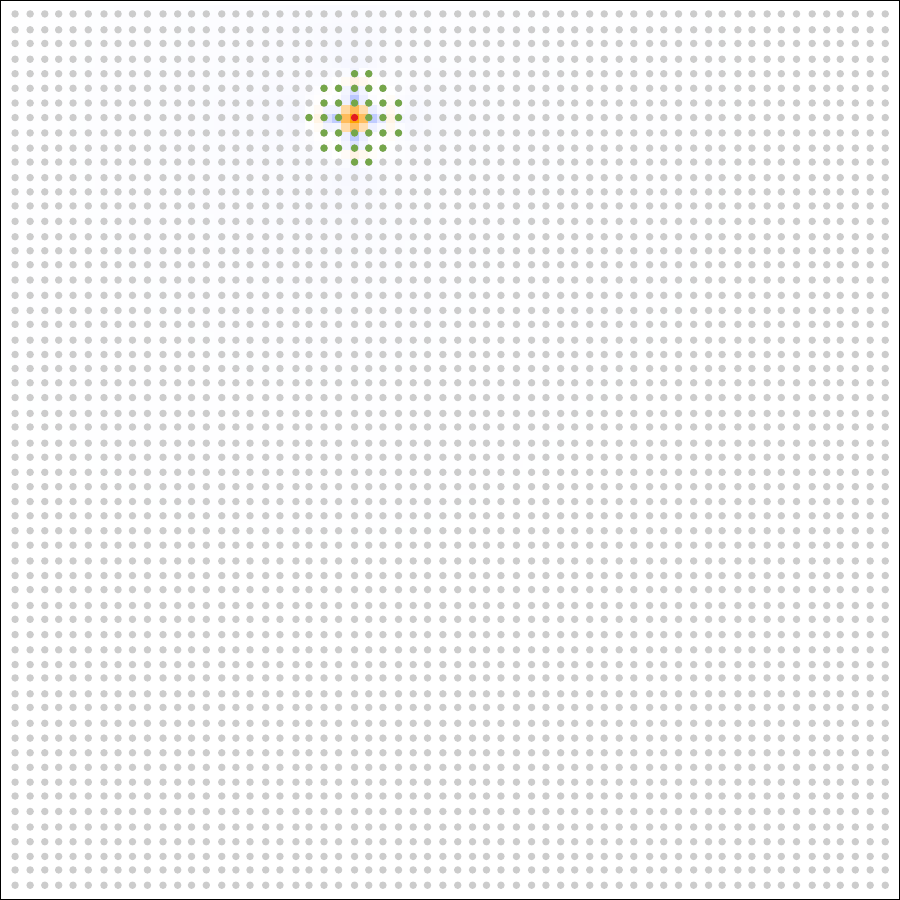}%
    \caption{For i = 1, 100, 400, 900, 1600, 2500, 3600 (from left to right), positive (orange) and negative (blue) conditional correlations with the $i$th input $\bx_i$ in the maximin ordering (red points) conditional on all (top) and $m=32$ nearest (bottom) previously ordered inputs (green points), for a GP with Mat\'ern covariance (range $r = 0.1$ and smoothness $\nu = 2$) on a grid of size $n = 60 \times 60 = 3600$. This figure is inspired by Figure 5 in \citet{Schafer2020}.}
    \label{fig:corcond_n3600}
\end{figure}

\begin{figure}[htbp]
    \centering
    \includegraphics[width=\textwidth]{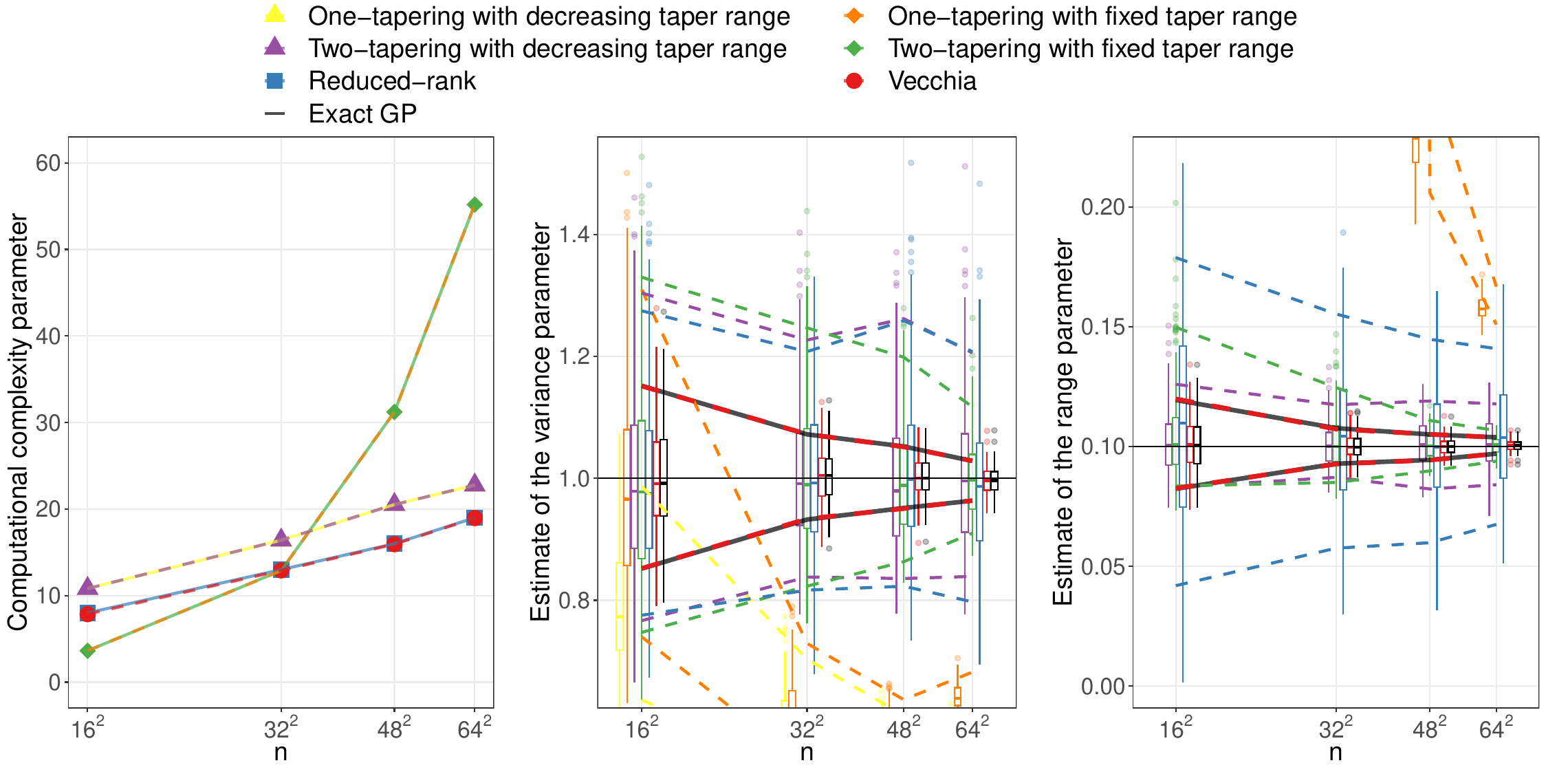}
    \caption{Comparison of maximum approximate-likelihood estimators based on taper, reduced-rank, and Vecchia approximations: The left panel compares the average number of non-zero entries of covariance matrix per observation for taper approximations, the number of inducing points for reduced-rank approximation, and the average size of conditioning set for Vecchia approximation. The center (right) panel shows box plots of estimates for the variance (range) parameter based on exact and approximate GP likelihoods for 200 synthetic datasets and 90$\%$ intervals of their sampling distributions. While estimating a parameter, the other parameter was assumed to be known. Data are generated at a regular grid of $n$ inputs on the unit square domain from a GP with Mat\'ern covariance with variance $1$, range $0.1$, and smoothness $0.5$. Note that the $x$-axes of the panels are on a log scale.}
    \label{fig:comparison_mle_boxplot}
\end{figure}


\end{document}